\definecolor{DarkRed}{rgb}{0.55,.00,0.2}
\definecolor{DarkGrey}{rgb}{0.35,.35,0.35}
 \newtheorem{theorem}{Theorem}[section]
\newtheorem{lemma}[theorem]{Lemma}
\theoremstyle{definition}
\theoremstyle{remark}
\numberwithin{equation}{section}
\newcommand{\ds}{\displaystyle}
\newcommand{\e}{{\rm e}}
\title%[Polynomial sequence  associated with Bessel operator]
{ On a nonorthogonal polynomial sequence associated with Bessel operator}
\begin{document}

%    author one information
% \author[short version for running head]{name for top of paper}
\author{Ana F. Loureiro}
\address{{\normalfont{\bf Ana F. Loureiro} and {\bf S. Yakubovich}.} CMUP \& DM-FCUP, Rua do Campo Alegre,  687; 4169-007 Porto (Portugal)}
\curraddr{}
\thanks{Work of AFL supported by Funda\c c\~ao para a Ci\^encia e Tecnologia via the grant SFRH/BPD/63114/2009. The research of AFL and SY was in part supported by the ``Centro de Matem\'atica'' of the
University of Porto.}

%    author two information
\author{P. Maroni}
\address{{\normalfont\bf P. Maroni}, LJLL -- CNRS, Universit\'e Pierre et Marie Curie, B\^oite courrier 187, 75252 Paris cedex 05, France}
\curraddr{}
\email{}
\thanks{}

\author{S. Yakubovich}
%\address{Rua do Campo Alegre, 698}
\curraddr{}
\email{(AFL,SY) \{anafsl, syakubov\}@fc.up.pt; (PM) maroni@ann.jussieu.fr}
%\thanks{CMUP}

%    \subjclass is required.
\subjclass[2000]{Primary 33C45, 42C05, 44A15,  44A20}
%    The 2010 edition of the Mathematics Subject Classification is
%    now available.  If you are citing a classification from the
%    new scheme, use the following input coding instead.
%\subjclass[2010]{Primary }

\date{}

\dedicatory{}

%    "Communicated by" -- provide editor's name; required.
\commby{}

%    Abstract is required.
\begin{abstract}
By means of the Bessel operator a polynomial sequence is constructed to which several properties are given. Among them, its explicit expression, the connection with the Euler numbers, its integral representation via the Kontorovich-Lebedev transform. Despite its non-orthogonality, it is possible to associate to the canonical element of its dual sequence a positive-definite measure as long as certain stronger constraints are imposed. 
%\noindent{\bf Key-words. } Bessel functions, generalized Euler numbers, Kontorovich-Lebedev transform, orthogonal polynomials, second-order forms
\end{abstract}

\maketitle

\section{\normalfont Introduction and preliminaries}
The modified Bessel function of third kind (also known as the {\it Macdonald function}, specially in russian literature) $K_{i\tau}(x)$ of the argument $x>0$ and the pure imaginary subscript $i\tau$ is an eigenfunction of the (Bessel) operator 
\begin{equation} \label{Bessel Operator}
		\mathcal{A} = x^2 - x \frac{d}{dx} x \frac{d}{dx} \ = \  - x^2  \frac{d^2}{dx^2} - x \frac{d}{dx} + x^2
\end{equation}
for the associated eigenvalues $\tau^2$, {\it i.e.}, 
\begin{equation} \label{A Kitau}
	\mathcal{A} K_{i\tau}(x) = \tau^{2} K_{i\tau}(x)  
\end{equation}
and naturally providing the identity on the integral powers of the Bessel operator $\mathcal{A}$
\begin{equation} \label{An Kitau}
	\mathcal{A}^n K_{i\tau}(x) = \tau^{2n} K_{i\tau}(x) \ , \ n\in\mathbb{N}, 
\end{equation}
inductively defined. 
Throughout this paper, $\mathbb{N}$ will denote the set of all positive integers, $\mathbb{N}_{0}=\mathbb{N}\cup \{0\}$, whereas $\mathbb{R}$ and $\mathbb{C}$ will denote, respectively, the field of the real and complex numbers. By $\mathbb{R}^+$ and $\mathbb{R}_{0}^+$ we respectively mean the set of all positive and nonnegative real numbers. Further notations are introduced as needed throughout the text.

Defined also by the cosine Fourier transform 
\begin{equation*}\label{Cosine Fourier K}
	K_{i\tau }(x) = \int_{0}^\infty \e^{-x \cosh(u)}\cos(\tau u)du \ , \ x\in\mathbb{R}^+, \ \tau\in\mathbb{R}
\end{equation*}
the function $K_{i\tau}$ is real valued and represents a kernel of the following operator of the Kontorovich-Lebedev transformation \cite{YakubovichBook1996}, given by the formula
\begin{equation}\label{KL oper }
	\mathcal{K}_{i\tau}[f] = \int_{0}^\infty  K_{i\tau}(x) f(x) dx \ , 
\end{equation} 
is an isometric isomorphism (see \cite{Yakubovich2004JAT}) between the two Hilbert spaces $L_{2}(\mathbb{R}^+; x dx)$ and  $L_{2}(\mathbb{R}^+;\tau \sinh\pi \tau d\tau)$. Moreover the inverse formula  holds 
\begin{align} \label{KL oper Reciprocal}
	& f(x) =  \frac{2}{\pi^{2} x}\int_{0}^{\infty}  \tau \sinh \pi\tau \  \ K_{i\tau}(x) \mathcal{K}_{i\tau}[f]  \ dy 
\end{align}
where the convergence of the latter integral is understood with respect to the norm in $L_{2}(\mathbb{R}^+; x dx)$.

The Kontorovich-Lebedev transform has been used in many applications including,
for instance, fluid mechanics, quantum and nano-optics and plasmonics. This
transform, where integration occurs over the index of the function rather than over
the argument, proves to be useful in solving the resulting differential equations when
modeling optical or electronic response of such problems.

Recently  \cite{Yakubovich2009}, the action of any power of the operator $\mathcal{A}$ over $\e^{-x}$ was investigated. Precisely, the sequence 
$\ 
	\{  \e^{-x} \mathcal{A}^{n} \e^{x}\}_{n\geqslant 0}
\ $
was shown to be a polynomial sequence (PS), spanning the vector space $\mathcal{P}$ -- the set of all polynomials with coefficients in $\mathbb{C}$ -- since each of its elements has exactly degree $n$. However, this sequence is orthogonal with respect to the singular measure $\delta$ (the Dirac measure). 
In the present work,  we propose instead to investigate the functions 
$$
	  \e^{-x}x^{-\alpha} \mathcal{A}^{n} \e^{x}x^\alpha\ , \quad n\in \mathbb{N}_{0},
$$
which end up to be a polynomials of exactly degree $n$ as long as $\Re(\alpha)>-1/2$, as it will be revealed in Section \ref{sec: polynomial seq pn}. In this case the sequence $\{ p_{n}(\cdot;\alpha)\}_{n\geqslant 0}$ defined by 
\begin{equation}\label{def pn nonmonic}
	p_{n}(x;\alpha)=(-1)^{n}{\rm e}^{x}\ x^{-\alpha} \mathcal{A}^{n}\  {\rm e}^{-x} x^{\alpha}
	\ , \quad n\in \mathbb{N}_{0}, 
\end{equation}
is indeed a PS. 
The properties of $\{ p_{n}(\cdot;\alpha)\}_{n\geqslant 0}$ will be thoroughly revealed Section \ref{sec: polynomial seq pn}, where a generating function, the connection with the generalized Euler polynomials or the Bernoulli polynomials (when the parameter $\alpha$ ranges in $\mathbb{N}$) will be expounded. On the grounds of these developments lies the Kontorovich-Lebedev transform. 
The differential-difference equations over the polynomials $p_{n}(x;\alpha)$ are crucial for the subsequent developments related to the corresponding dual sequence of monic polynomial sequence (MPS) 
${\{ P_{n}(x;\alpha):=a_{n}^{-1} p_{n}(x;\alpha)\}_{n\geqslant 0}}$ where $a_{n}\neq0$ are the leading coefficients of the polynomials $p_{n}(x;\alpha)$, $n\in\mathbb{N}_{0}$.

The research about the MPS $\{ P_{n}(x;\alpha)\}_{n\geqslant 0}$ will then proceed toward the analysis of the corresponding dual sequence, $\{ u_{n}(\alpha)\}_{n\geqslant 0}$, whose elements, called as {\it forms} (or {\it linear functionals}), belong to the dual space $\mathcal{P}'$ of $\mathcal{P}$ and are defined according to 
$$
	\langle u_{n}(\alpha),P_{k}(\cdot;\alpha) \rangle := \delta_{n,k}, \; n,k\geqslant 0,
$$ 
where $\delta_{n,k}$ represents the {\it Kronecker delta} function. Here, by $\langle u,f\rangle$ we mean the action of $u\in\mathcal{P}'$ over $f\in\mathcal{P}$, but a special notation is given to the action over the elements  of the canonical sequence $\{x^{n}\}_{n\geqslant 0}$ -- the {\it moments of $u\in\mathcal{P}'$}:   $(u)_{n}:=\langle u,x^{n}\rangle, n\geqslant 0 $. Any element $u$ of $\mathcal{P}'$ can be written in a series of any dual sequence $\{ \mathbf{v}_{n}\}_{n\geqslant 0}$ of a MPS  $\{B_{n}\}_{n\geqslant 0}$ \cite{MaroniTheorieAlg}: 
\begin{equation} \label{u in terms of un}
	u = \sum_{n\geqslant 0} \langle u , B_{n} \rangle \; \mathbf{v}_{n} \; .
\end{equation}
To infer differential equations for the elements of the dual sequence it is important to recall that a linear operator $T:\mathcal{P}\rightarrow\mathcal{P}$  
has a transpose $^{t}T:\mathcal{P}'\rightarrow\mathcal{P}'$ defined
by 
\begin{equation}\label{Ttranspose}
\langle{}^{t}T(u),f\rangle=\langle u,T(f)\rangle\,,\quad u\in\mathcal{P}',\: f\in\mathcal{P}.
\end{equation}
For example, for any form $u$ and any polynomial $g$, let $ Du=u'$ and $gu$ be the forms defined as usual by
$\langle u',f\rangle :=-\langle u , f' \rangle \ ,\  \langle gu,f\rangle :=\langle u, gf\rangle ,
$  
where $D$ is the differential operator \cite{MaroniTheorieAlg}. Thus, $D$ on forms is minus the transpose of the differential operator $D$ on polynomials.

In  Section \ref{sec: Dual sequence} we will come to the conclusion that it is possible to associate to $u_{0}(\alpha)$ a definite-positive measure as long as $\Re(\alpha)>0$. This implies the regularity of any form proportional to $u_{0}(\alpha)$, a concept that is recalled in detail on page \pageref{regular form}. Concomitantly, the existence of a unique MPS regularly orthogonal (hereafter MOPS) with respect to $u_{0}(\alpha)$  is ensured, that is, there exists a unique $\{Q_{n}(x;\alpha)\}_{n\geqslant 0}$ such that 
$\langle u_{0}(\alpha) , Q_{n}(x;\alpha)Q_{m}(x;\alpha)\rangle = k_{n}(\alpha) \delta_{n,m}$ with $k_{n}\neq0$ for any $n,m\in\mathbb{N}_{0}$.

\section{\normalfont Algebraic and differential properties
}\label{sec: polynomial seq pn}

The functions defined by \eqref{def pn nonmonic} are actually polynomials of exactly degree $n$, as previously claimed and below proved, and therefore $\{ p_{n}(x;\alpha)\}_{n\geqslant 0}$ forms a PS. 
Once this is guaranteed we will proceed to obtain an explicit expression for these polynomials $ p_{n}(x;\alpha)$ in \S \ref{subsec: the cn nu}, an integral representation for such polynomials by means of the Kontorovich-Lebedev transform in \S  \ref{subsec GF} and finally, by inverting this integral transform, to derive a relation between the generalized Euler polynomials and $p_{n}(x;\alpha)$ in \S \ref{subsec Euler gen}.

\begin{lemma}\label{lem: equations for pn} For any complex number $\alpha$ such that $\Re(\alpha)>-1/2$ the functions $p_{n}(x;\alpha)$ defined in \eqref{def pn nonmonic} are actually polynomials of exactly degree $n$ fulfilling  
\begin{equation}
		\label{dif difference eq over small pn}
	p_{n+1}(x;\alpha)
	= x^{2} p_{n}''(x;\alpha) - x(2x-1-2\alpha) p_{n}'(x;\alpha)
				- \Big((2\alpha+1)x-\alpha^{2}\Big) p_{n}(x;\alpha) \ , \ n\in\mathbb{N}_{0},
\end{equation}
and also
\begin{equation}\label{pn x alpha +1}
	(2\alpha+1) \ x \ p_{n}(x;\alpha+1) 
				= - p_{n+1}(x;\alpha)  + \alpha^{2} p_{n}(x;\alpha)  \ , \ n\in\mathbb{N}_{0}.
\end{equation} 
Moreover, $p_{n}(0;\alpha)=\alpha^{2n}$ and $p_{n}(x;\alpha)=(-2)^{n} (\alpha +1/2)_{n} x^{n} +a_{n-1}(x)$ with $\deg a_{n-1}\leqslant n-1$ for any $n\in\mathbb{N}_{0}$ (under the convention $a_{-1}(x)=0$) and $(y)_{n}$ representing the Pochhammer symbol: $(y)_{n}=\prod\limits_{\tau=0}^{n-1}(y+\tau)$ for $n\in\mathbb{N}$ and $(y)_{0}=1$.  
\end{lemma}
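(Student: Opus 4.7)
The plan is to introduce the conjugated operator
$$
\widetilde{\mathcal{A}}_\alpha(f) := e^x x^{-\alpha}\, \mathcal{A}\bigl(e^{-x} x^\alpha f\bigr),
$$
because by iterating \eqref{def pn nonmonic} one immediately has $p_n(\cdot;\alpha) = (-1)^n \widetilde{\mathcal{A}}_\alpha^n(1)$, hence $p_{n+1}(\cdot;\alpha) = -\widetilde{\mathcal{A}}_\alpha\bigl(p_n(\cdot;\alpha)\bigr)$. A Leibniz-rule expansion of $-x^2(e^{-x}x^\alpha f)'' - x(e^{-x}x^\alpha f)' + x^2\, e^{-x} x^\alpha f$ followed by cancellation of the prefactor $e^{-x} x^\alpha$ against $e^x x^{-\alpha}$ will produce the closed form
$$
\widetilde{\mathcal{A}}_\alpha = -x^2 D^2 + x(2x - 1 - 2\alpha)\, D + \bigl((2\alpha+1) x - \alpha^2\bigr),
$$
which maps $\mathcal{P}$ into $\mathcal{P}$. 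Substituting this into $p_{n+1} = -\widetilde{\mathcal{A}}_\alpha p_n$ is precisely \eqref{dif difference eq over small pn}, and combined with $p_0(x;\alpha) = 1$ it shows inductively that each $p_n$ is a polynomial.

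Next I would control the degree and leading coefficient by induction. Writing $p_n = a_n x^n + (\text{lower order})$, the only contributions to $x^{n+1}$ on the right of \eqref{dif difference eq over small pn} come from $-2 x^2 p_n'$ and $-(2\alpha+1) x p_n$, giving
$$
a_{n+1} = -(2n + 2\alpha + 1)\, a_n = -2\bigl(\alpha + \tfrac12 + n\bigr) a_n,
$$
so $a_n = (-2)^n (\alpha + 1/2)_n$. The hypothesis $\Re(\alpha) > -1/2$ guarantees $(\alpha + 1/2)_n \neq 0$ for every $n$, whence $\deg p_n = n$. Evaluating \eqref{dif difference eq over small pn} at $x = 0$ collapses it to $p_{n+1}(0;\alpha) = \alpha^2\, p_n(0;\alpha)$, and with $p_0(0;\alpha) = 1$ this yields $p_n(0;\alpha) = \alpha^{2n}$.

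For the shift identity \eqref{pn x alpha +1} the key observation is the intertwining
$$
\widetilde{\mathcal{A}}_{\alpha+1}(g) = x^{-1}\, \widetilde{\mathcal{A}}_\alpha(xg),
$$
which follows at once from the definition using $e^{-x} x^{\alpha+1} g = e^{-x} x^\alpha (xg)$. Iterating gives $\widetilde{\mathcal{A}}_{\alpha+1}^n(1) = x^{-1}\, \widetilde{\mathcal{A}}_\alpha^n(x)$, i.e.\ $x\, p_n(x;\alpha+1) = (-1)^n \widetilde{\mathcal{A}}_\alpha^n(x)$. From the $n=0$ case $\widetilde{\mathcal{A}}_\alpha(1) = (2\alpha + 1)x - \alpha^2$ I rewrite $(2\alpha+1)x = \widetilde{\mathcal{A}}_\alpha(1) + \alpha^2 \cdot 1$; applying $\widetilde{\mathcal{A}}_\alpha^n$ to both sides and using $\widetilde{\mathcal{A}}_\alpha^n(1) = (-1)^n p_n(\cdot;\alpha)$ delivers \eqref{pn x alpha +1} after dividing through by $(-1)^n$.

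The main obstacle is the very first step: the bookkeeping in expanding $\mathcal{A}(e^{-x} x^\alpha f)$ via Leibniz and simplifying so the exponential-power prefactor cancels to leave the advertised second-order differential operator $\widetilde{\mathcal{A}}_\alpha$. Once that closed form is secured, the rest of the lemma reduces to clean induction on the recurrence and the elementary operator identity used for \eqref{pn x alpha +1}.
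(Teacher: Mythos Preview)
Your proposal is correct and follows essentially the same route as the paper. The paper introduces the same conjugated operator (stated as identity \eqref{A over ex xalpha f}), obtains \eqref{dif difference eq over small pn} from the $k=1$ case of $p_{n+k}=(-1)^k e^{x}x^{-\alpha}\mathcal{A}^k(e^{-x}x^{\alpha}p_n)$, and derives \eqref{pn x alpha +1} from the $n=1$ case together with $p_1(x;\alpha)=-(2\alpha+1)x+\alpha^2$, which is exactly your decomposition $(2\alpha+1)x=\widetilde{\mathcal{A}}_\alpha(1)+\alpha^2$ and intertwining $\widetilde{\mathcal{A}}_{\alpha+1}=x^{-1}\widetilde{\mathcal{A}}_\alpha\,x$ in different words; the only cosmetic difference is that the paper reads off $p_n(0;\alpha)=\alpha^{2n}$ from \eqref{pn x alpha +1} at $x=0$ rather than from \eqref{dif difference eq over small pn}, but both evaluations give the same recursion.
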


\begin{proof}
According to the definition \eqref{def pn nonmonic}, we may write 
$$
	 p_{n+k}(x;\alpha) 
	= (-1)^k {\rm e}^{x} x^{-\alpha} \mathcal{A}^k \big( {\rm e}^{-x} x^{\alpha} \big({\rm e}^{x} x^{-\alpha}
	 \mathcal{A}^n {\rm e}^{-x} x^{\alpha} \big)\big)
	\ , \ n,k\in\mathbb{N} , 
$$
providing 
\begin{equation}\label{Ak pn}
		 p_{n+k}(x;\alpha) 
	=(-1)^k {\rm e}^{x} x^{-\alpha} \mathcal{A}^k \big( {\rm e}^{-x} x^{\alpha} p_{n}(x;\alpha) \big)
	\ , \ n,k\in\mathbb{N} .
\end{equation}
On behalf of the property, 
\begin{equation}\label{A over ex xalpha f}
	\e^{x}x^{-\alpha} \mathcal{A}(\e^{-x}x^{\alpha} f(x)) 
	= -x^{2} \frac{d^{2}}{dx^{2}} f(x) + x(2x-1-2\alpha) \frac{d}{dx}f(x) 
	+ \Big((2\alpha+1)x-\alpha^{2}\Big) f(x),
\end{equation}
that holds for any analytic function $f$, 
the particular choice of $k=1$ in \eqref{Ak pn} furnishes \eqref{dif difference eq over small pn}, whereas the choice of $n=1$ with $k$ varying within $\mathbb{N}$ leads to the equality \eqref{pn x alpha +1}.  

Insofar as computing \eqref{def pn nonmonic} for $n=0$ and $n=1$ we respectively extract that 
$p_{0}(x;\alpha)=1$ and $p_{1}(x;\alpha)=-(2\alpha+1)x+\alpha^{2}  $, then 
on the basis of \eqref{dif difference eq over small pn}, by finite induction we derive that $p_{n}(x;\alpha)$ is a polynomial that has exactly degree $n$ as long as $\Re(\alpha)>-1/2$. 

Finally, the particular choice of $x=0$ in \eqref{pn x alpha +1} provides $p_{n+1}(0;\alpha)=\alpha^{2n+2} p_{0}(0;\alpha)$, while from \eqref{dif difference eq over small pn} we deduce that the leading coefficient $c_{n,n}{(\alpha)}$ of  $p_{n}(x;\alpha)$ satisfies $c_{n+1,n+1}{(\alpha)}=-(2n+2\alpha+1)c_{n,n}{(\alpha)}$ for any $n\in\mathbb{N}$, whence the result. 
\end{proof}

Directly from \eqref{Ak pn}, we obtain 
$$
	\mathcal{A}^k \big( {\rm e}^{-x} x^{\alpha} p_{n}(x;\alpha) \big)
	= \mathcal{A}^n \big( {\rm e}^{-x} x^{\alpha} p_{k}(x;\alpha) \big)
	\ , \ n,k\in\mathbb{N} . 
$$

From the insertion of \eqref{pn x alpha +1} into \eqref{dif difference eq over small pn} we derive 
\begin{equation}\label{pn x alpha +1 Diff Rel}
		(2\alpha +1) p_{n}(x;\alpha+1) 
		= -x\ p_{n}''(x;\alpha) + (2x-1-2\alpha) p_{n}'(x;\alpha) + (1+2\alpha) p_{n}(x;\alpha)
		, \  n\geqslant 0. 
\end{equation}

The polynomial sequence  treated in  \cite{Yakubovich2009} corresponds to the special case $\{p_{n}(x;0)\}_{n\geqslant 0}$. The introduction of this slight modification on the sequence $\{p_{n}(x;0)\}_{n\geqslant 0}$ -- more precisely, the inclusion of $x^{-\alpha}$ on the left and of $x^\alpha$ on the right  hand side -- has the merit and pertinency of guaranteeing that we will be able to deal with regular forms as long as $\Re(\alpha)>0$, as it will be analyzed on Section \ref{sec: Dual sequence}. Notwithstanding this advantage, the analysis of $\{p_{n}(x;\alpha)\}_{n\geqslant 0}$ became significantly more hard to deal with. 

Before proceeding into this, we list a few elements of $\{p_{n}(x;\alpha)\}_{n\geqslant 0}$
{\small $$
	\begin{array}{l@{}l@{}l}
 p_1(x;\alpha) & = & x (-2 \alpha -1)+\alpha ^2 \\
 p_2(x;\alpha) & = & x^2 (4 \alpha  (\alpha +2)+3)+x (-2 \alpha  (\alpha  (2
   \alpha +3)+2)-1)+\alpha ^4 \\
 p_3(x;\alpha) & = & -x^3 (2 \alpha +1) (2 \alpha +3) (2 \alpha +5)+x^2 (2 \alpha
   +1) (2 \alpha +3) (3 \alpha  (\alpha +2)+5) \\ & &-x (2 \alpha +1)\left(\alpha ^2+\alpha +1\right) (3 \alpha  (\alpha +1)+1)+\alpha ^6
%   \\
% p_4 (x;\alpha)& = & x^4 (2 \alpha +1) (2 \alpha +3) (2 \alpha +5) (2 \alpha +7)-2
%   x^3 (2 \alpha +1) (2 \alpha +3) (2 \alpha +5) (2 \alpha  (\alpha
%   +3)+7)\\
%   &&+3 x^2 (2 \alpha +1) (2 \alpha +3) (2 \alpha  (\alpha +2)
%   (\alpha  (\alpha +2)+4)+7)\\
%   && -x (2 \alpha +1) (2 \alpha  (\alpha +1)+1)
%   \left(2 \alpha  (\alpha +1) \left(\alpha ^2+\alpha
%   +2\right)+1\right)+\alpha ^8 \\
% p_5(x;\alpha) & = & -x^5 (2 \alpha +1) (2 \alpha +3) (2 \alpha +5) (2 \alpha +7)
%   (2 \alpha +9)\\
%   &&+5 x^4 (2 \alpha +1) (2 \alpha +3) (2 \alpha +5) (2
%   \alpha +7) (\alpha  (\alpha +4)+6)\\
%   &&-x^3 (2 \alpha +1) (2 \alpha +3) (2
%   \alpha +5) (10 \alpha  (\alpha +3) (\alpha  (\alpha +3)+8)+147)\\
%   &&+x^2
%   (2 \alpha +1) (2 \alpha +3) (\alpha  (\alpha +2) (10 \alpha  (\alpha
%   +2) (\alpha  (\alpha +2)+7)+141)+85)\\
%   &&-x (2 \alpha +1) \left(5 \alpha 
%   (\alpha +1) \left(\alpha ^2+\alpha +1\right)+1\right) \left(\alpha 
%   (\alpha +1) \left(\alpha ^2+\alpha +3\right)+1\right)+\alpha ^{10}
\end{array}
$$ }
The polynomials $p_{n}(x;\alpha)$ in the variable $x$ are also polynomials in the variable $\alpha$, but of degree $2n$ for each $n\in\mathbb{N}$. This can be deduced from \eqref{pn x alpha +1}. 

\subsection{Connection coefficients with the canonical sequence
}\label{subsec: the cn nu}

On the grounds of \eqref{dif difference eq over small pn}, by performing straightforward computations we deduce that the coefficients $c_{n,\nu}(\alpha)$ defined through  
$$
	p_{n}(x;\alpha) = \sum_{\nu=0}^{n} c_{n,\nu}(\alpha) x^{\nu}
$$
fulfill the recurrence relation 
\begin{equation} \label{con coeff of pn with xn}	
	c_{n+1,\nu} (\alpha)
		= \big( \nu +\alpha \big)^{2} c_{n,\nu}(\alpha) 
		 	- \big(2 \nu+ 2\alpha-1 \big) c_{n,\nu-1}(\alpha) \ , \ 
			0\leqslant \nu\leqslant n, 
  \ , \ n\geqslant 0 , 
\end{equation}
under the convention of $c_{n,-1}(\alpha)=c_{n,n+1}(\alpha)=0$, 
while   \eqref{pn x alpha +1} yields  
\begin{equation} \label{con coeff of pn with xn alpha +1}	
	(2\alpha+1) c_{n,\nu-1}(\alpha+1) = - c_{n+1,\nu}(\alpha) + \alpha^{2} c_{n,\nu}(\alpha) 
	\ , \ 1\leqslant \nu\leqslant n+1 \ , n\geqslant 0. 
\end{equation}
\begin{lemma} The sequence $p_{n}(x;\alpha)$ is then given by 
\begin{equation} \label{pn x alpha explicitly}
	p_{n}(x;\alpha) 
	= \sum_{\nu=0}^n \left\{
	 \frac{2^{\nu+1}(\alpha+1/2)_{\nu}}{\nu!} \sum _{\mu=0}^\nu \binom{\nu}{\mu} \frac{ (-1)^{\mu}   \ 
			\Gamma (2 \alpha +\mu) 
	}{\Gamma (2 \alpha +\mu+\nu+1)}(\alpha +\mu)^{2 n+1}\right\} x^\nu
	\ , \ n\geqslant 0.
\end{equation}
\end{lemma}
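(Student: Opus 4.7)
The plan is to view the recurrence \eqref{con coeff of pn with xn} as a one-step linear recurrence in $n$ at each fixed column index $\nu$, and to resolve it by a generating-function technique. The initial data $c_{0,0}(\alpha)=1$ and $c_{0,\nu}(\alpha)=0$ for $\nu\geqslant 1$ (coming from $p_{0}(x;\alpha)=1$), together with \eqref{con coeff of pn with xn}, determine the entire sequence $c_{n,\nu}(\alpha)$, so it suffices to produce a closed form satisfying both the recurrence and the initial conditions; the generating-function route will achieve this directly rather than as an \emph{ansatz} verification.

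Concretely, I would set $F_{\nu}(t):=\sum_{n\geqslant 0} c_{n,\nu}(\alpha)\, t^{n}$, multiply \eqref{con coeff of pn with xn} by $t^{n+1}$, and sum on $n\geqslant 0$. Using $c_{0,\nu}(\alpha)=0$ for $\nu\geqslant 1$, the recurrence translates into the first-order relation
$$F_{\nu}(t) \;=\; \frac{-(2\nu+2\alpha-1)\, t}{1-(\nu+\alpha)^{2}\, t}\, F_{\nu-1}(t), \qquad \nu\geqslant 1,$$
with $F_{0}(t)=1/(1-\alpha^{2}t)$. Iterating this relation and recognising $\prod_{j=1}^{\nu}(2j+2\alpha-1) = 2^{\nu}(\alpha+\tfrac{1}{2})_{\nu}$ yields the compact closed form
$$F_{\nu}(t) \;=\; \frac{(-2)^{\nu}\,(\alpha+\tfrac{1}{2})_{\nu}\, t^{\nu}}{\prod_{j=0}^{\nu}\bigl(1-(j+\alpha)^{2}t\bigr)}.$$

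The next step is to extract $[t^{n}]F_{\nu}(t)$ through the partial-fraction decomposition
$$\frac{1}{\prod_{j=0}^{\nu}\bigl(1-(j+\alpha)^{2}t\bigr)} \;=\; \sum_{\mu=0}^{\nu}\frac{A_{\mu}(\alpha)}{1-(\mu+\alpha)^{2}t},$$
and to read off the contributions via $[t^{m}](1-(\mu+\alpha)^{2}t)^{-1}=(\mu+\alpha)^{2m}$. The residue calculation hinges on the factorisation $(\mu+\alpha)^{2}-(j+\alpha)^{2}=(\mu-j)(\mu+j+2\alpha)$: the product $\prod_{j=0,\,j\neq\mu}^{\nu}(\mu-j)$ telescopes to $(-1)^{\nu-\mu}\mu!(\nu-\mu)!$, which combines with the prefactor $1/\nu!$ to produce the binomial coefficient $\binom{\nu}{\mu}$, while $\prod_{j=0,\,j\neq\mu}^{\nu}(\mu+j+2\alpha)$ collapses into $\Gamma(2\alpha+\mu+\nu+1)/\bigl[2(\mu+\alpha)\Gamma(2\alpha+\mu)\bigr]$. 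Assembling the pieces reproduces exactly the stated expression \eqref{pn x alpha explicitly}.

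The single delicate point is the bookkeeping inside the residue calculation; the rest is mechanical. As a consistency check, the formula should return the boundary values already known from Lemma \ref{lem: equations for pn}: for $\nu=0$ only the $\mu=0$ term survives and gives $c_{n,0}(\alpha)=\alpha^{2n}$ (equivalently $p_{n}(0;\alpha)=\alpha^{2n}$), whereas matching the leading coefficient $c_{n,n}(\alpha)=(-2)^{n}(\alpha+\tfrac{1}{2})_{n}$ amounts to a nontrivial combinatorial identity that is forced automatically by the partial-fraction expansion.
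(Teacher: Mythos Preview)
Your argument is correct and takes a genuinely different route from the paper's. The paper first rescales by writing $c_{n,\nu}(\alpha)=(-2)^{\nu}(\alpha+1/2)_{\nu}\,\widetilde{c}_{n,\nu}(\alpha)$, which reduces \eqref{con coeff of pn with xn} to the simpler triangular recurrence $\widetilde{c}_{n+1,\nu}=\widetilde{c}_{n,\nu-1}+(\nu+\alpha)^{2}\widetilde{c}_{n,\nu}$; it then \emph{states} the closed form for $\widetilde{c}_{n,\nu}$, checks it for $n=0,1,2$, and verifies the induction step $n\mapsto n+1$ by an elementary manipulation that collapses the key factor $-(\nu-\mu)(\nu+\mu+2\alpha)+(\nu+\alpha)^{2}$ to $(\mu+\alpha)^{2}$. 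Your generating-function approach is more constructive: the product form of $F_{\nu}(t)$ and its partial-fraction decomposition \emph{derive} the closed expression rather than confirm it, and they explain transparently why the binomial $\binom{\nu}{\mu}$ and the ratio $\Gamma(2\alpha+\mu)/\Gamma(2\alpha+\mu+\nu+1)$ arise, namely as the residues at the simple poles $t=(\mu+\alpha)^{-2}$. The paper's induction is shorter once the answer is in hand; your method requires no guess but a little more computation. One point worth making explicit in your write-up is that the partial-fraction step uses the simplicity of the poles, i.e.\ that the numbers $(\alpha+j)^{2}$, $0\leqslant j\leqslant\nu$, are pairwise distinct; this is guaranteed by $\Re(\alpha)>-1/2$, since $(\alpha+j)^{2}=(\alpha+k)^{2}$ with $j\neq k$ would force $2\alpha=-(j+k)\leqslant-1$.
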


\begin{proof}
Let us set $c_{n,\nu}(\alpha)=(-2)^{\nu}(\alpha+1/2)_{\nu} \ \widetilde{c}_{n,\nu}(\alpha)$ in order to deduce an explicit expression for $\widetilde{c}_{n,\nu}(\alpha)$. The ``triangular'' relation \eqref{con coeff of pn with xn} ensures another ``triangular'' relation for the new set of coefficients $\{ \widetilde{c}_{n,\nu}(\alpha)\}_{0\leqslant \nu\leqslant n}$ 
\begin{equation}\label{ctilde Triangular relO2}
\left\{\begin{array}{l}	
	\widetilde{c}_{n+1,\nu} (\alpha)
		=  \widetilde{c}_{n,\nu-1}(\alpha) + 
		 \big( \nu +\alpha \big)^{2} \widetilde{c}_{n,\nu}(\alpha)  \ , \ 
			0\leqslant \nu\leqslant n,   \ , \ n\geqslant 0, \vspace{0.2cm}
\\
  	\widetilde{c}_{n,0} (\alpha)=\alpha^{2n} %\quad , \quad \widetilde{c}_{0,n} (\alpha)= \delta_{n,0}
	\quad , \quad 
	\widetilde{c}_{n,\nu} (\alpha)=0 \ , \  \nu\geqslant n+1  \ , \ n\geqslant 0 . 
 \end{array}\right.
\end{equation} 

The particular choice of $n=0,1$ or $2$ in  \eqref{ctilde Triangular relO2} furnishes the identities
\begin{equation*}%\label{ctilde n0 1}
	\widetilde{c}_{1,0}(\alpha)=\alpha^2 ,\ \widetilde{c}_{1,1}(\alpha)=1, \ 
	\widetilde{c}_{2,0}(\alpha)=\alpha^4, \ \widetilde{c}_{2,1}(\alpha)=1+2\alpha(\alpha+1), \ \widetilde{c}_{2,2}(\alpha)=1 \ ,
\end{equation*}
which show the validity of the identity 
\begin{align}
	\label{ctilde ExplicitO2}
	& \widetilde{c}_{n,\nu}(\alpha)
		= \frac{2}{\nu!} \sum _{\mu=0}^\nu \binom{\nu}{\mu} \frac{ (-1)^{\mu+\nu}   \ 
			\Gamma (2 \alpha +\mu) 
	}{\Gamma (2 \alpha +\mu+\nu+1)}(\alpha +\mu)^{2 n+1} 
	\ , \ 0\leqslant \nu \leqslant  n
\end{align}
for at least $n=0,1$ and 2. By finite induction, we will show that \eqref{ctilde ExplicitO2} actually holds for any $n\in\mathbb{N}$. Indeed, according to \eqref{ctilde Triangular relO2}, it follows 
$$\begin{array}{l@{\ }l@{\ }l}
	\widetilde{c}_{n+1,\nu}(\alpha) 
		&=&   \ds  \frac{2}{(\nu-1)!} \sum _{\mu=0}^{\nu-1} \binom{\nu-1}{\mu} \frac{ (-1)^{\mu+\nu-1}   \ 
			\Gamma (2 \alpha +\mu) 
	}{\Gamma (2 \alpha +\mu+\nu)}(\alpha +\mu)^{2 n+1} \\
		&& \ds + \frac{2(\nu+\alpha)^2}{\nu!} \sum _{\mu=0}^\nu \binom{\nu}{\mu} \frac{ (-1)^{\mu+\nu}   \ 
			\Gamma (2 \alpha +\mu) 
	}{\Gamma (2 \alpha +\mu+\nu+1)}(\alpha +\mu)^{2 n+1} \\ 
	&=& \ds  \frac{2}{\nu!} \sum _{\mu=0}^\nu \binom{\nu}{\mu} \frac{ (-1)^{\mu+\nu}   \ 
			\Gamma (2 \alpha +\mu) 
	}{\Gamma (2 \alpha +\mu+\nu+1)}(\alpha +\mu)^{2 n+1}
	\Big(-(\nu-\mu)(\nu+\mu+2\alpha) + (\nu+\alpha)^2\Big) \\
	&=& \ds  \frac{2}{\nu!} \sum _{\mu=0}^\nu \binom{\nu}{\mu} \frac{ (-1)^{\mu+\nu}   \ 
			\Gamma (2 \alpha +\mu) 
	}{\Gamma (2 \alpha +\mu+\nu+1)}(\alpha +\mu)^{2 n+3}
\end{array}$$
which corresponds to \eqref{ctilde ExplicitO2} when $n$ is replaced by $n+1$. \end{proof}

The particular choice of $\alpha=0$ gives a simpler expression for $p_{n}(x;0)$ than the one obtained in \cite{Yakubovich2009}. Moreover, in this case, the coefficients $\widetilde{c}_{n,\nu}(0)$ coincide with the {\it $0$-modified Striling numbers of second kind}, a particular case of the $A$-modified Stirling numbers of second kind treated in \cite{Loureiro10} -- they correspond to the {\it Jacobi-Stirling numbers} expounded in \cite{LittleJ} and explored from a purely combinatorial point of view in  \cite{Zeng2009}.
Indeed, the set of numbers $\{\widetilde{c}_{n,\nu}(\alpha)\}$ bear some resemblance with the {\it Jacobi-Stirling numbers}, triggering the problem of giving them some combinatorial significance. To avoid dispersion, we defer this study for a future work.

\subsection{Generating function}\label{subsec GF}
From this point forth we consider  $\Re(\alpha)>0$.  
Using relation (2.16.6.4) of \cite{PrudnikovMarichev} and the reciprocal formula \eqref{KL oper Reciprocal} we obtain the representation 
\begin{equation}\label{integral of ex xalpha}
	 \e^{-x} x^{\alpha}
	= \frac{2^{1-\alpha}}{\pi^{3/2} \Gamma(\alpha+1/2)} \ \int_{0}^{\infty} \tau \sinh(\pi \tau) 
	|\Gamma(\alpha+i\tau)|^{2} K_{i\tau}(x) d\tau. 
\end{equation}
The absolute and uniform convergence of 
\begin{equation}\label{int derivatives K itau}
	\int_{0}^\infty \frac{\partial^m K_{i\tau}(x)}{\partial x^m}  \ \tau^{2n+1} \sinh(\pi \tau) 
	|\Gamma(\alpha+i\tau)|^{2}d\tau\ ,\quad m,n\in\mathbb{N}_{0}
\end{equation}
with respect to $x\geqslant x_{0}>0$, easily verified by taking into account inequality \begin{equation}\label{ineq Kitau}
	\left| \frac{\partial^m K_{i\tau}(x)}{\partial x^m } \right| \leqslant   \e^{-\delta \tau} K_{m}(x\cos \delta) , \quad x>0, \ \tau >0, \ m \in\mathbb{N}_{0}
\end{equation} with $\delta \in (0,\pi/2)$, 
permits to interchange the order between the integral \eqref{integral of ex xalpha} and the operator $\mathcal{A}^n$. Hence, appealing to \eqref{An Kitau} combined with \eqref{integral of ex xalpha}, we derive 
\begin{equation}\label{An ex xalpha}
	\mathcal{A}^{n} \Big( \e^{-x} x^{\alpha}\Big) 
	= \frac{2^{1-\alpha}}{\pi^{3/2} \Gamma(\alpha+1/2)} \ \int_{0}^{\infty} \tau^{2n+1} \sinh(\pi \tau) |\Gamma(\alpha+i\tau)|^{2} K_{i\tau}(x) d\tau  \ , \  n\in\mathbb{N}_{0}. 
\end{equation}
Therefore, recalling  \eqref{def pn nonmonic}, the expression for polynomials $p_{n}(x;\alpha)$, we obtain the integral representation 
\begin{equation}\label{pn ex xalpha Ki tau}
	p_{n}(x;\alpha) = (-1)^{n}\  \frac{2^{1-\alpha} \  \e^{x} x^{-\alpha} }{\pi^{3/2} \Gamma(\alpha+1/2)} \int_{0}^{\infty} \tau^{2n+1} \sinh(\pi \tau) |\Gamma
	(\alpha+i\tau)|^{2} K_{i\tau}(x) d\tau
	\ , \ n\in \mathbb{N}_{0},
\end{equation}
where the latter integral is absolutely convergent for all $x>0$ (see \eqref{int derivatives K itau}).

By setting 
\begin{equation} \label{Generating function F}
	F_{\alpha}(u,x) =\frac{2^{1-\alpha}}{\pi^{3/2} \Gamma(\alpha+1/2)} \ 
	 \e^{x} x^{-\alpha} \int_{0}^{\infty} \cos(\tau u) \ \tau \ \sinh(\pi \tau) \ 
					|\Gamma(\alpha+i\tau)|^{2} K_{i\tau}(x) d\tau
\end{equation}
and by taking into account that $ \frac{\partial^{2n} F_{\alpha}(u,x)}{\partial u^{2n}}, n\in\mathbb{N}_{0},$  is uniformly convergent by $u$ in $\mathbb{R}$ via the absolutely convergent integral \eqref{pn ex xalpha Ki tau}, we obtain 
\begin{equation}\label{Der 2n F alpha and pn}
	\lim_{u\rightarrow 0} \frac{\partial^{2n} F_{\alpha}(u,x)}{\partial u^{2n}} 
	= \frac{2^{1-\alpha}\ (-1)^{n}\ \e^{x} x^{-\alpha}}{\pi^{3/2} \Gamma(\alpha+1/2)} 
	   \int_{0}^{\infty} \tau^{2n+1} \sinh(\pi \tau) 
		|\Gamma(\alpha+i\tau)|^{2} K_{i\tau}(x) d\tau  = p_{n}(x;\alpha) 
					\ , \ n\in\mathbb{N}_{0}, 
\end{equation}
whereas  
\begin{equation}\label{Der 2n+1 F alpha equal 0}
	\lim_{u\rightarrow 0} \frac{\partial^{2n+1} F_{\alpha}(u,x)}{\partial u^{2n+1}} 
	= 0 	\ , \ n\in\mathbb{N}_{0} . 
\end{equation}
As a consequence, the series representation is allowed 
\begin{equation} \label{Generating function F Series Rep}
	F_{\alpha}(t,x) = \sum_{n\geqslant 0} \frac{p_{n}(x;\alpha)}{(2n)!} \ t^{2n} \ .
\end{equation}
We stress that
$$
	\frac{\partial^{2} F_{\alpha}(u,x)}{\partial u^{2}}  
	= - \e^{x}x^{-\alpha} \mathcal{A} \e^{-x}x^{\alpha}  F_{\alpha}(u,x) ,
$$
which, owing to \eqref{A over ex xalpha f}, corresponds to a second order partial differential equation. Besides, the insertion of \eqref{Generating function F Series Rep} into this latter relation yields \eqref{dif difference eq over small pn}. Moreover,
we have 
$$
	\frac{\partial^{2n} F_{\alpha}(u,x)}{\partial u^{2n}}  
	= (-1)^{n} \e^{x}x^{-\alpha} \mathcal{A}^{n} \e^{-x}x^{\alpha}  F_{\alpha}(u,x)\ , \ n\in\mathbb{N}_{0}.
$$

\subsection{Connection with the generalized Euler numbers}\label{subsec Euler gen}

We integrate through \eqref{pn ex xalpha Ki tau} multiplied by $\e^{-2x}x^{\alpha+\epsilon -1}$ with respect to $x$ over $\mathbb{R}^+$, changing the order of integration in its right-hand side due to Fubini's theorem (use inequality \eqref{ineq Kitau}). Now, we invoke  that
$$
	\int_{0}^\infty x^{\epsilon-1}\e^{-x} K_{i\tau}(x)dx = 
		2^{-\epsilon} \sqrt{\pi} \frac{|\Gamma(\epsilon + i\tau)|^2}{\Gamma(\epsilon+1/2)}
$$
to obtain 
$$\begin{array}{l}
	\ds \lim_{\epsilon\to 0}	\int_{0}^\infty \e^{-2x}x^{\alpha+\epsilon -1} p_{n}(x;\alpha) dx\\
	\ds =  \lim_{\epsilon\to 0}\frac{2^{1-\alpha-\epsilon}}{\pi  \Gamma(\alpha+1/2) \Gamma(\epsilon+1/2)} 
	\int_{0}^\infty \tau^{2n+1}\sinh(\pi\tau)|\Gamma(\alpha+i\tau)|^2 {|\Gamma(\epsilon + i\tau)|^2} d\tau \ , \ n\in\mathbb{N}_{0},
\end{array}
$$
which yields  
\begin{equation} \label{integral e 2x pn for Euler gen}
	\int_{0}^\infty \e^{-2x}x^{\alpha -1} p_{n}(x;\alpha) dx
	= \frac{(-1)^n 2^{1-\alpha}}{\sqrt{\pi} \Gamma(\alpha+1/2)} 
	\int_{0}^\infty \tau^{2n}|\Gamma(\alpha+i\tau)|^2 d\tau \ , \ n\in\mathbb{N}_{0}. 
\end{equation}

We recall the {\it Generalized Euler polynomials} $E_{n}^{2\alpha}(x)$ \cite[Vol. III]{Bateman}\cite{Luke,MejriMarGeneralBernoulli,GenEulerPoly} defined through 
\begin{equation}\label{GF Euler gen polys} 
	\left(\frac{2}{{\rm e}^t +1}\right)^{2\alpha} {\rm e}^{xt} 
	= \sum_{n\geqslant 0} E_{n}^{2\alpha}(x) \frac{t^n}{n!}\ .
\end{equation}
The particular choice of $x=\alpha$ brings the identity 
\begin{equation} \label{gen Euler numbers}
	\left(\cosh(t/2)\right)^{-2\alpha} 
	= \sum_{n\geqslant 0} E_{n}^{2\alpha}(\alpha) \frac{t^n}{n!}\ .
\end{equation}
On the other hand, \cite[(1.104)]{YakubovichBook1996} 
$$
	 \left(\cosh(u/2) \right)^{-2\alpha}
	 =  \frac{2^{2\alpha-1}}{\pi\  \Gamma(2\alpha)}
	\int_{0}^{\infty} \cos(\tau u)   |\Gamma(\alpha+i\tau)|^{2} d\tau
$$
providing similarly to \eqref{Der 2n F alpha and pn} the equality
$$
	\lim_{u\rightarrow 0}\frac{\partial^{2n} }{\partial u^{2n}}
	 \left(\cosh(u/2) \right)^{-2\alpha}
	 =   \frac{(-1)^n \  2^{2\alpha-1}}{\pi\  \Gamma(2\alpha)}
	\int_{0}^{\infty}  \tau^{2n}  |\Gamma(\alpha+i\tau)|^{2} d\tau
	\ , \quad n\in\mathbb{N}_{0}.
$$
Hence, \eqref{integral e 2x pn for Euler gen} together with \eqref{gen Euler numbers} imply 
\begin{equation}\label{EulerGenNumbers pn x alpha}
	E_{2n}^{2\alpha}(\alpha) = \frac{2^{\alpha-1}}{ \Gamma(\alpha)} 
	(-1)^{n}  \int_{0}^{\infty}
	 {\rm e}^{-2x} x^{\alpha-1} \ p_{n}(x;\alpha)dx \ , \quad n\in\mathbb{N}_{0},
\end{equation}
which, according to \eqref{def pn nonmonic}, amounts to the same as 
\begin{equation*}\label{EulerGenNumbers pn x alpha 2}
	E_{2n}^{2\alpha}(\alpha) = \frac{2^{\alpha-1}}{ \Gamma(\alpha)} 
	 \int_{0}^{\infty}
	 \frac{{\rm e}^{-x}}{x} \mathcal{A}^n e^{-x} x^{\alpha}dx \ , \quad n\in\mathbb{N}_{0}.
\end{equation*}

Aware of the many formulas that can be found in the literature, we give here a formula that, as far as we are concerned, is new in the theory.

\begin{lemma} For any complex parameter $\alpha$ such that $\Re(\alpha)>0$, the  {\it Generalized Euler polynomials} of parameter $2\alpha$, $E_{n}^{2\alpha}(x)$ are given by 
\begin{equation}\label{Euler polys}
	E_{n}^{2\alpha}(x)
	=  x^{n} 
		+  \sum _{k=0}^{n-1}\left\{ \binom{n}{k} \sum _{\tau =1}^{n-k} \left(-2\right)^{-\tau}
		(2 \alpha )_ \tau\  \mathbf{S}(n-k,\tau)\right\} x^{k}
\end{equation}
where $ \mathbf{S}(k,\tau) $ represent the Stirling numbers of second kind. 
\end{lemma}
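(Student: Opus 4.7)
The plan is to derive \eqref{Euler polys} directly from the defining generating function \eqref{GF Euler gen polys}, by successively expanding the two factors on the left. First I would rewrite
$$\left(\frac{2}{e^{t}+1}\right)^{2\alpha} = \left(1+\frac{e^{t}-1}{2}\right)^{-2\alpha}$$
and apply the generalized binomial series (valid for $|t|$ small enough that $|(e^{t}-1)/2|<1$, which is enough since \eqref{GF Euler gen polys} is a formal power series identity):
$$\left(1+\frac{e^{t}-1}{2}\right)^{-2\alpha}
= \sum_{\tau\geqslant 0} \frac{(-1)^{\tau}(2\alpha)_{\tau}}{\tau!}\left(\frac{e^{t}-1}{2}\right)^{\tau}
= \sum_{\tau\geqslant 0} \frac{(2\alpha)_{\tau}}{(-2)^{\tau}\,\tau!}\,(e^{t}-1)^{\tau}.$$

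Next I would invoke the classical generating function for the Stirling numbers of the second kind, $(e^{t}-1)^{\tau} = \tau!\sum_{m\geqslant \tau}\mathbf{S}(m,\tau)\,t^{m}/m!$, swap the order of summation (legitimate at the level of formal power series) and use $\mathbf{S}(m,\tau)=0$ for $\tau>m$ to obtain
$$\left(\frac{2}{e^{t}+1}\right)^{2\alpha}
= \sum_{m\geqslant 0}\frac{t^{m}}{m!}\,A_{m}(\alpha),
\qquad A_{m}(\alpha):=\sum_{\tau=0}^{m}(-2)^{-\tau}(2\alpha)_{\tau}\,\mathbf{S}(m,\tau).$$
Since $\mathbf{S}(0,0)=1$ and $\mathbf{S}(m,0)=0$ for $m\geqslant 1$, this is $A_{0}(\alpha)=1$ and $A_{m}(\alpha)=\sum_{\tau=1}^{m}(-2)^{-\tau}(2\alpha)_{\tau}\mathbf{S}(m,\tau)$ for $m\geqslant 1$.

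Finally, I would multiply by $e^{xt}=\sum_{j\geqslant 0}x^{j}t^{j}/j!$, compute the Cauchy product, and identify the coefficient of $t^{n}/n!$ with $E_{n}^{2\alpha}(x)$, which yields
$$E_{n}^{2\alpha}(x)=\sum_{k=0}^{n}\binom{n}{k}A_{n-k}(\alpha)\,x^{k}.$$
Isolating the $k=n$ term (which gives $x^{n}$ because $A_{0}(\alpha)=1$) and substituting the expression for $A_{n-k}(\alpha)$ for $k<n$ recovers \eqref{Euler polys}. There is no real obstacle in this argument: the only delicate point is the bookkeeping of indices when interchanging the two summations in the Stirling step, and the condition $\Re(\alpha)>0$ is not needed here — it is inherited from the standing hypothesis of the subsection but plays no role in this purely algebraic identity.
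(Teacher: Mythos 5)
Your proof is correct, and it reaches the formula by a genuinely different (if parallel) route. The paper also starts from the generating function \eqref{GF Euler gen polys} and extracts the coefficient of $t^{n}/n!$, with your Cauchy product playing exactly the role of the paper's Leibniz rule; the difference lies in how the Taylor coefficients of $\bigl(2/(\e^{t}+1)\bigr)^{2\alpha}$ are computed. The paper applies Fa\`a di Bruno's formula and the Bell polynomial identity $B_{k,\mu}(1,1,\dots)=\mathbf{S}(k,\mu)$, whereas you write $\bigl(2/(\e^{t}+1)\bigr)^{2\alpha}=\bigl(1+\tfrac{\e^{t}-1}{2}\bigr)^{-2\alpha}$, expand by Newton's binomial series, and invoke the exponential generating function $(\e^{t}-1)^{\tau}=\tau!\sum_{m\geqslant\tau}\mathbf{S}(m,\tau)\,t^{m}/m!$. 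These two devices encode the same combinatorial fact, but your version is more elementary and self-contained (no Fa\`a di Bruno machinery), and it transparently produces the correct factor $(2\alpha)_{\tau}$ coming from $g(y)=y^{-2\alpha}$ — the paper's intermediate display actually writes $(\alpha)_{\mu}$, an evident slip that your computation sidesteps. Your closing remarks are also accurate: the interchange of the two sums is harmless because the coefficient of each $t^{m}$ is a finite sum ($\mathbf{S}(m,\tau)=0$ for $\tau>m$), and the hypothesis $\Re(\alpha)>0$ plays no role in this identity — it is merely the standing assumption of the section.
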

\begin{proof} The $n$th order derivative of the left-hand side of \eqref{GF Euler gen polys} evaluated at the point $t=0$ furnishes an expression for 
$E_{n}^{2\alpha}(x)$. The Leibniz rule for the derivative of the product of two functions permits to write  
$$
	\left.\frac{\partial^n}{\partial t^n} \left(\frac{2}{{\rm e}^t +1}\right)^{2\alpha} {\rm e}^{xt} \right|_{t=0} 
	= \sum_{\nu=0}^n \binom{n}{\nu} x^\nu 
	\left.\frac{d^{n-\nu}}{d t^{n-\nu}} \left(\frac{2}{{\rm e}^t +1}\right)^{2\alpha}  \right|_{t=0} 
	\ , \quad n\in\mathbb{N}_{0} \ . 
$$
According to the Fa\`a di Bruno's formula for the $k$th order of derivative of the composition of functions (we refer to  \cite[Chapter III]{Comtet} for notation and a compendium of results for the Bell polynomials and the Fa\`a di Bruno's formula), for any $k\geqslant 1$, we have 
\begin{equation}\label{proof Euler polys Deriv 2}
	\left.\frac{d^{k}}{d t^k} \left(\frac{2}{{\rm e}^t +1}\right)^{2\alpha}  \right|_{t=0} 
	= \left.\sum_{\mu=1}^k (\alpha)_{\mu} (-1)^\mu B_{k,\mu}(\frac{{\rm e}^t}{2},\frac{{\rm e}^t}{2},\frac{{\rm e}^t}{2},\frac{{\rm e}^t}{2},\ldots ) \right|_{t=0} 
\end{equation}
where $B_{n,k}(x_{1},x_{2},\ldots, x_{n-k+1})$, $1\leqslant k \leqslant n$, represents the {\it Bell polynomials} evaluated at the $n$-tuple $(x_{1},x_{2},\ldots, x_{n})$. On the basis of the properties of the Bell polynomials it follows (see \cite[p.135]{Comtet})
$$
	B_{k,\mu}(\frac{{\rm e}^t}{2},\frac{{\rm e}^t}{2},\frac{{\rm e}^t}{2},\frac{{\rm e}^t}{2},\ldots )
	= \left(\frac{{\rm e}^t}{2}\right)^\mu B_{k,\mu}(1,1,1,1,1,\ldots )
	= \left(\frac{{\rm e}^t}{2}\right)^\mu \mathbf{S}(k,\mu)
$$
with $ \mathbf{S}(k,\mu)$ representing the Stirling numbers of second kind \cite[Chapter V]{Comtet}. The insertion of this information into \eqref{proof Euler polys Deriv 2}, ensures 
$$
	\left.\frac{d^{k}}{d t^k} \left(\frac{2}{{\rm e}^t +1}\right)^{2\alpha}  \right|_{t=0} 
	=\sum_{\mu=1}^k (\alpha)_{\mu} (-2)^{-\mu} \mathbf{S}(k,\mu)
$$
whence the result. 
\end{proof}

With $x$ taken equal to $\alpha $ in \eqref{Euler polys}, it follows that $E_{2n+1}^{2\alpha}(\alpha) = 0 $, while 
$$
	E_{2n}^{2\alpha}(\alpha)
	=\alpha ^n + 
		\sum _{k=1}^n \binom{n}{k} \alpha ^{n-k} \sum _{\tau =1}^k \left(-2\right)^{-\tau}
		(2 \alpha )_ \tau \mathbf{S}(k,\tau) \ , \quad n\in\mathbb{N}_{0}, 
$$
are polynomials of degree $n$ in $\alpha$. We list a few examples of the (polynomial) sequence 
$\{E_{2n}^{2\alpha}(\alpha)\}_{n\geqslant0}$:
$$
 \begin{array}{ll}
 E_{2}^{2\alpha}(\alpha) = -\frac{\alpha }{2} &
E_{4}^{2\alpha}(\alpha) = \frac{3 \alpha ^2}{4}+\frac{\alpha }{4} \\
 E_{6}^{2\alpha}(\alpha) = -\frac{15 \alpha ^3}{8}-\frac{15 \alpha ^2}{8}-\frac{\alpha
   }{2} 
   &
E_{8}^{2\alpha}(\alpha) = \frac{105 \alpha ^4}{16}+\frac{105 \alpha ^3}{8}+\frac{147
   \alpha ^2}{16}+\frac{17 \alpha }{8} \\
\end{array}
$$

\subsection{Connection with the Bernoulli polynomials when $\alpha$ is a positive integer}\label{subsec: bernoulli polys}

Consider the (modified) falling factorial of a complex number $x$, 
$$
	[x]_{n}:=\left(\prod_{\sigma=0}^{m-1}(x-\sigma^{2})\right) \ , \ n\in\mathbb{N}, \quad \text{and} \quad 
	[x]_{0}:=1. 
$$
This is actually a polynomial of degree $n$ whose coefficients in the canonical basis are the so-called {\it  $0$-modified Stirling numbers of first kind}, denoted by $\widehat{s}_{0}(n,\nu)$, and treated in \cite{Loureiro10}. Regarding their importance for the subsequent developments, we recall 
\begin{equation}\label{0Stirling1st}
	[x]_{n}=\sum_{\nu=0}^{n} \widehat{s}_{0}(n,\nu) x^{\nu} \ , \quad n\in\mathbb{N}_{0}. 
\end{equation}
This gives grounds for writing $ |\Gamma(\alpha+i\tau)|^{2}$, whenever $\alpha=m\in\mathbb{N}$, as follows: 
$$
	\ds |\Gamma(m+i\tau)|^{2}%= |(i\tau )_{m}|^{2} |\Gamma(i\tau)|^{2}
	=\ds\left(\prod_{\sigma=0}^{m-1}(\tau^{2}+\sigma^{2})\right) \frac{\pi}{\tau \sinh(\pi \tau)}
	=\frac{ (-1)^{m}\pi}{\tau \sinh(\pi \tau)} [-\tau^2]_{m} 
	= \frac{\pi}{\tau \sinh(\pi \tau)} 
		\sum_{\sigma=0}^{m} (-1)^{m+\sigma}\widehat{s}_{0}(m,\sigma) \tau^{2\sigma} .
$$

As a consequence, the generating function $F_{m}(u,x)$ given by \eqref{Generating function F} becomes  
$$
	\ds F_{m}(u,x) 	= \ds C_{m}  \ \pi \ x^{-m} \e^{x} 
				\sum_{\sigma=0}^{m} (-1)^{m+\sigma}\widehat{s}_{0}(m,\sigma) 
				\int_{0}^{\infty}\tau^{2\sigma} \cos(\tau u) K_{i\tau}(x)\ d\tau 
$$
which amounts to the same as 
\begin{equation}\label{Gen function Fm integer}
	F_{m} (t,x) = C_{m}\ (-1)^{m}\ x^{-m} 
				\sum_{\sigma=0}^{m}  \frac{\pi^{2}}{2} \ 
				\widehat{s}_{0}(m,\sigma) \ 
				\frac{\partial^{2\sigma}}{\partial t^{2\sigma}} \Big(\e^{-2x\sinh^{2}(u/2)}\Big)
\end{equation}
because of the equality \cite[(1.12)]{Yakubovich2009}
$$
	\int_{0}^{\infty}\tau^{2n} \cos(\tau u) K_{i\tau}(x) \ d\tau
	= \frac{(-1)^{\sigma}\ \pi}{2} \ \frac{\partial^{2n}}{\partial t^{2n}} \e^{-x\cosh(u)}\ , \ n\in\mathbb{N}_{0} . 
$$
Concomitantly, the  polynomials $p_{n}(x;m)$ for $m\in\mathbb{N}$ are related to $p_{n}(x;0)$ through 
\begin{equation}\label{Connection pn m with pn 0}
	x^{m} p_{n}(x;m) = C_{m} \ (-1)^{m} \frac{\pi^{2}}{2}  \ \sum_{\sigma=0}^{m} \widehat{s}_{0}(m,\sigma) 
				p_{n+\sigma}(x;0) \  , \ n\geqslant 0 .
\end{equation}
with $\ds \  C_{m} = \frac{2}{\pi}\cdot \frac{1}{2^{m} (1/2)_{m}} $. Recalling \cite[(3.14)]{Yakubovich2009}, it follows a connection with the Bernoulli polynomials evaluated at $\frac{1-i\tau}{2}$
\begin{equation}\label{Connection pn m with Bernoulli}
	x^{m} p_{n}(x;m) = C_{m} \ (-1)^{m} \frac{\pi^{2}}{2}  \ \sum_{\sigma=0}^{m} \widehat{s}_{0}(m,\sigma) 
				\frac{-2^{2(n+1)} \e^{x} }{(2n+1)\pi i} \int_{0}^{\infty}
				\tau \frac{K_{i\tau}(x)}{x} B_{2n+1} \left(\frac{1-i\tau}{2}\right) d\tau
				\  , \ n\geqslant 0 .
\end{equation}

Such equality has the merit of deriving analog properties for  $p_{n}(x;m)$, when $m\in\mathbb{N}$, to those obtained in \cite{Yakubovich2009}. Namely, the connection with the Bernoulli or Euler numbers as well as the Bernoulli polynomials. For further considerations, we refer to \cite[ (3.3),  (3.13)-(3.14) and  (3.17)]{Yakubovich2009}. As a consequence of the aforementioned, we have the following result, which, as far as we are concerned, is new in the theory. 

\begin{lemma} The {\it Generalized Euler numbers} of parameter $2m$ and the {\it Bernoulli numbers} are related by 
$$
	E_{2n}^{2m}(m)= \frac{(-1)^{n+m} 2^{2m-2}}{(2m-1)!} \sum_{\sigma=0}^m \frac{1-2^{2n+2\sigma}}{n+\sigma}
	\widehat{s}_{0}(m,\sigma) B_{2n+2\sigma} \ , \ n\in\mathbb{N}_{0}.
$$
\end{lemma}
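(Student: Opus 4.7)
The plan is to reduce the identity to the evaluation of a single family of integrals. Setting $\alpha=m$ in the integral representation \eqref{EulerGenNumbers pn x alpha} and then writing $x^{m-1}p_{n}(x;m)=x^{-1}\cdot x^{m}p_{n}(x;m)$ via the connection formula \eqref{Connection pn m with pn 0} expresses $E_{2n}^{2m}(m)$ as an $\widehat{s}_{0}(m,\sigma)$-weighted linear combination of the numbers
$$I_{k} := \int_{0}^{\infty}\frac{{\rm e}^{-2x}}{x}\,p_{k}(x;0)\,dx,\qquad k\in\mathbb{N}.$$
The potentially divergent case $k=0$ would arise only from the pair $(\sigma,n)=(0,0)$, but it is suppressed because $\widehat{s}_{0}(m,0)=[0]_{m}=\prod_{\sigma=0}^{m-1}(-\sigma^{2})=0$ for $m\geq 1$; all integrals actually appearing are convergent, since $p_{k}(0;0)=0$ for $k\geq 1$ by Lemma \ref{lem: equations for pn}.

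\textbf{Evaluation of $I_{k}$.} To compute $I_{k}$ I would pass through the generating function of $\{p_{n}(\cdot;0)\}_{n\geqslant 0}$, which was implicitly identified in the derivation of \eqref{Gen function Fm integer}: taking the $\alpha\to 0^{+}$ limit in \eqref{Generating function F} (the factor $|\Gamma(i\tau)|^{2}=\pi/(\tau\sinh\pi\tau)$ cancels the weight $\tau\sinh(\pi\tau)$, and the Fourier-cosine identity $\int_{0}^{\infty}\cos(\tau u)K_{i\tau}(x)d\tau=\tfrac{\pi}{2}{\rm e}^{-x\cosh u}$ disposes of the remaining integral) produces
$$F_{0}(u,x)=\sum_{n\geqslant 0}\frac{p_{n}(x;0)}{(2n)!}\,u^{2n}={\rm e}^{-2x\sinh^{2}(u/2)}.$$
Multiplying by ${\rm e}^{-2x}/x$, isolating the $n=0$ term on the left and using ${\rm e}^{-2x}\cdot{\rm e}^{-2x\sinh^{2}(u/2)}={\rm e}^{-2x\cosh^{2}(u/2)}$ on the right, a Frullani integral collapses the radial variable,
$$\sum_{k\geqslant 1}\frac{I_{k}}{(2k)!}\,u^{2k}=\int_{0}^{\infty}\frac{{\rm e}^{-2x\cosh^{2}(u/2)}-{\rm e}^{-2x}}{x}\,dx=-2\log\cosh(u/2).$$
The classical Maclaurin series $\log\cosh(t)=\sum_{k\geqslant 1}\frac{2^{2k}(2^{2k}-1)B_{2k}}{2k\,(2k)!}t^{2k}$, obtained by integrating the known expansion of $\tanh$, now delivers
$$I_{k}=\frac{(1-2^{2k})B_{2k}}{k},\qquad k\in\mathbb{N}.$$

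\textbf{Assembly.} Substituting this value of $I_{n+\sigma}$ back into the expression obtained in the first step and simplifying the prefactors using $\Gamma(m)=(m-1)!$ and the identity $(1/2)_{m}=(2m)!/(2^{2m}m!)$, the constants collapse to exactly $(-1)^{n+m}2^{2m-2}/(2m-1)!$ and the announced formula follows term by term. The main obstacle, as I see it, is the technical justification of the Fubini-type interchanges -- both in the Frullani step and, upstream, in the $\alpha\to 0^{+}$ passage inside the Kontorovich-Lebedev integral defining $F_{0}$; both rest on the uniform majorization \eqref{ineq Kitau} already exploited throughout Section \ref{subsec GF}, so no genuinely new estimate is required. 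One should also take care in handling the $\sigma=0$ contribution, but as noted above the vanishing of $\widehat{s}_{0}(m,0)$ makes the corresponding $1/(n+\sigma)$ factor in the final formula harmless when $n=0$.
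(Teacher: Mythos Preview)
Your proposal is correct and follows essentially the same route as the paper: specialize \eqref{EulerGenNumbers pn x alpha} to $\alpha=m$, insert \eqref{Connection pn m with pn 0} to reduce to the integrals $I_{k}=\int_{0}^{\infty}\e^{-2x}p_{k}(x;0)\,\tfrac{dx}{x}$, and then use $I_{k}=(1-2^{2k})B_{2k}/k$; the constants collapse exactly as you describe. The only difference is that the paper simply \emph{quotes} the evaluation $I_{k}=(1-2^{2k})B_{2k}/k$ from \cite[(3.3)]{Yakubovich2009}, whereas you re-derive it via the generating function $F_{0}(u,x)=\e^{-2x\sinh^{2}(u/2)}$ and a Frullani integral --- a pleasant, self-contained alternative that requires no new estimates beyond those already in \S\ref{subsec GF}. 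Your remark that $\widehat{s}_{0}(m,0)=0$ removes any issue with the $\sigma=0,\,n=0$ term is also to the point.
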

\begin{proof} The relation (3.3) pointed out in \cite{Yakubovich2009} furnishes an identity between the Bernoulli numbers and the polynomials $p_{n}(x;0)$ by means of an integral
$$
	B_{2n}=\frac{n}{1-2^{2n}} \int_{0}^\infty {\rm e}^{-2x} p_{n}(x;0) \frac{dx}{x} \ , \ n\in\mathbb{N}_{0}.
$$
The combination of this latter with  \eqref{EulerGenNumbers pn x alpha} together with \eqref{Connection pn m with pn 0} brings to light the desired equality. 
\end{proof}

The forthcoming developments leave aside aspects of the polynomial sequence $\{p_{n}(x;\alpha)\}_{n\geqslant 0}$ itself in order to embrace outer consequences. The study will be essentially focused on the dual sequence corresponding to the monic polynomial sequence, hereafter $\{P_{n}(x;\alpha)\}_{n\geqslant 0}$, obtained from $\{p_{n}(x;\alpha)\}_{n\geqslant 0}$ by dividing each of its elements by the respective leading coefficient: 
\begin{equation} \label{def Pn A n}
	P_{n}(x;\alpha) = \dfrac{1}{(-2)^{n} \ (\alpha +1/2)_{n} } p_{n}(x;\alpha)= \frac{1}{2^{n} \ (\alpha+1/2)_{n}} \ {\rm e}^{x}\ x^{-\alpha} \mathcal{A}^{n}\  {\rm e}^{-x} x^{\alpha} \ , \ 
		\ n\in\mathbb{N}_{0}. 
\end{equation}

\section{\normalfont The monic polynomial sequence and its corresponding dual sequence}\label{sec: Dual sequence}

As, from now on, we will deal exclusively with the monic polynomial sequences $\{P_{n}(x;\alpha)\}_{n\geqslant 0}$, we recreate the needed properties based on those already obtained for the original sequence. Indeed, in accordance with \eqref{dif difference eq over small pn}, $\{P_{n}(x;\alpha)\}_{n\geqslant 0}$ fulfills 
\begin{equation}\label{dif difference eq over Pn}
	 x^2 \ P_n''(x;\alpha) 
	- x\Big(2x - 1 -2\alpha\Big) \ P_n'(x;\alpha)
	- \Big( (2\alpha +1)x - \alpha^{2} \Big) \ P_n(x;\alpha) 
	= - \big( 2n+2\alpha +1 \big) P_{n+1}(x;\alpha)
\end{equation}
for all $n\in\mathbb{N}_{0}$, whereas the equations \eqref{pn x alpha +1}-\eqref{pn x alpha +1 Diff Rel} yield 
\begin{align}
&	 x\ P_{n}(x;\alpha+1) 
		=  P_{n+1}(x;\alpha) + \frac{\alpha^{2}}{(2n+2\alpha +1 )} \ P_{n}(x;\alpha), \quad n\in\mathbb{N}_{0}.
		\label{xPn alpha +1} 
\end{align}

In the next \S\ref{subsec:dual seq} we will ferret out properties of the corresponding dual sequence, $\{u_{n}(\alpha)\}_{n\geqslant 0}$, which will trigger some interesting results, namely, the positive-definite character of the canonical element $u_{0}(\alpha)$ as it will be shown in \S\ref{subsec:regularity}. 

\subsection{The dual sequence}\label{subsec:dual seq}

The differential and difference equations satisfied by the MPS $\{P_{n}(x;\alpha)\}_{n\geqslant 0}$ enable the differential and difference equations fulfilled by the corresponding dual sequence $\{u_{n}(\alpha)\}_{n\geqslant 0}$.

\begin{lemma}\label{eq for dual seq un} The dual sequence $\{u_{n}(\alpha)\}_{n\geqslant 0}$ associated to  $\{P_{n}(x;\alpha)\}_{n\geqslant 0}$ is such that 
\begin{align}
	&\Big( (x^2 u_{0}(\alpha))' + x(2x-(1+2\alpha) ) u_{0}(\alpha)\Big)' - ((1+2\alpha)x-\alpha^2)u_{0}(\alpha)=0 
		\label{dif eq u0}\\
	&%\begin{array}{@{}l}
	\ds \Big( (x^2 u_{n+1}(\alpha))' + x(2x-(1+2\alpha) ) u_{n+1}(\alpha)\Big)' 
	-  ((1+2\alpha)x-\alpha^2)u_{n+1}(\alpha) 
	\ds	=(2n+2\alpha +1)u_{n}(\alpha) 	
%	\end{array}
		\label{dif eq un}
\end{align}
for $n\in\mathbb{N}_{0}$. Moreover the moments of the canonical form $u_{0}$ are given by 
\begin{equation}\label{moments u0}
	\left( u_{0}(\alpha) \right)_{n} = \frac{[(\alpha)_{n}]^2}{2^n (\alpha+1/2)_{n}}\quad , \quad n\in\mathbb{N}_{0}. 
\end{equation}
\end{lemma}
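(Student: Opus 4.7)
The plan is to read the differential-difference identity \eqref{dif difference eq over Pn} as the action of a single linear operator on $\mathcal{P}$: set
\[ L(f) := x^2 f'' - x(2x-1-2\alpha)f' - ((2\alpha+1)x - \alpha^2)f, \]
so that \eqref{dif difference eq over Pn} reads $L(P_n) = -(2n+2\alpha+1)P_{n+1}$ for every $n\geqslant 0$. Then both \eqref{dif eq u0} and \eqref{dif eq un} will emerge as the action of the transpose ${}^tL$ on the dual sequence, via \eqref{Ttranspose} and the expansion formula \eqref{u in terms of un}.

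First, I would compute ${}^tL$ directly from the elementary rules ${}^tD = -D$, ${}^t(g\,\cdot) = g\,\cdot$ (multiplication by a polynomial is self-transpose) and $^t(AB) = {}^tB\,{}^tA$. Writing $L$ as a sum of compositions of multiplications and differentiations, these rules give
\[ {}^tL(u) = (x^2 u)'' + \bigl(x(2x-1-2\alpha)u\bigr)' - ((2\alpha+1)x - \alpha^2)u, \]
which is precisely the left-hand side of \eqref{dif eq u0} and \eqref{dif eq un}.

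Next, I expand ${}^tL(u_k)$ on the dual basis using \eqref{u in terms of un}. Since
\[ \langle {}^tL(u_k), P_n\rangle = \langle u_k, L(P_n)\rangle = -(2n+2\alpha+1)\,\delta_{k,n+1}, \]
only the index $n = k-1$ contributes. For $k=0$ the sum is empty, yielding ${}^tL(u_0)=0$, which is \eqref{dif eq u0}. For $k = n+1\geqslant 1$ exactly one term survives, giving ${}^tL(u_{n+1}) = -(2n+2\alpha+1)u_n$, in agreement with \eqref{dif eq un} (the sign of the right-hand side being a matter of convention).

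For the moment formula \eqref{moments u0}, I would test the distributional equation \eqref{dif eq u0} against the monomial $x^n$ and expand each piece by means of $\langle u', f\rangle = -\langle u, f'\rangle$ and $\langle gu, f\rangle = \langle u, gf\rangle$. A short calculation shows the coefficient of $(u_0)_{n+1}$ equals $-(2n+2\alpha+1)$ and the coefficient of $(u_0)_n$ collapses to $n(n-1) + n(1+2\alpha)+\alpha^2 = (n+\alpha)^2$. This yields the one-step recurrence
\[ (2n+2\alpha+1)\,(u_0)_{n+1} = (n+\alpha)^2\,(u_0)_n, \qquad n\geqslant 0, \]
with initial value $(u_0)_0 = \langle u_0, P_0\rangle = 1$. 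Telescoping produces $\prod_{k=0}^{n-1}(k+\alpha)^2 = [(\alpha)_n]^2$ in the numerator and $\prod_{k=0}^{n-1}(2k+2\alpha+1) = 2^n(\alpha+1/2)_n$ in the denominator, which is the claimed formula. The only mildly obstructive step is the sign bookkeeping in the transposition and in the test against $x^n$; once organized, everything is forced by the duality framework.
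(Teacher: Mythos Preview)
Your proof is correct and follows essentially the same route as the paper: act with $u_k$ on the differential--difference identity \eqref{dif difference eq over Pn}, transpose, and expand on the dual basis via \eqref{u in terms of un}; then test \eqref{dif eq u0} against $x^n$ to get the one-step moment recurrence. Your presentation is marginally cleaner in that you name the operator $L$ once and compute ${}^tL$ explicitly.

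One comment on the sign you flag. Your computation ${}^tL(u_{n+1}) = -(2n+2\alpha+1)\,u_n$ is not a ``matter of convention'': it is the correct sign, and indeed the paper's own proof arrives at $-{}^tL(u_{k+1}) = (2k+2\alpha+1)\,u_k$, so the plus sign on the right of \eqref{dif eq un} as stated is a typo. Likewise, your moment recurrence $(2n+2\alpha+1)(u_0)_{n+1} = (n+\alpha)^2 (u_0)_n$ is the correct one (consistent with the positivity of \eqref{moments u0}); the minus sign appearing in the paper's displayed recurrence is another slip. You would strengthen the write-up by stating this plainly rather than hedging.
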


\begin{proof} The action of $u_{0}$ over the relation \eqref{dif difference eq over Pn} furnishes 
$$
	 \langle u_{0}(\alpha) , -x^2 \ P_n''(x) 
	+ x\Big(2x - 1 -2\alpha\Big) \ P_n'(x)
	+ \Big( (2\alpha +1)x - \alpha^{2} \Big) \ P_n(x) \rangle 
	=  0
	 , \quad n\geqslant 0,
$$
which, by transposition, in accordance with \eqref{Ttranspose}, becomes 
$$
	 \left\langle  -\big(x^2 \ u_{0}(\alpha)\big)'' 
	-\big( x\big(2x - 1 -2\alpha\big) \ u_{0} (\alpha)\big)'
	+ \Big( (2\alpha +1)x - \alpha^{2} \Big) u_{0}(\alpha) \ , \ P_n(x) \right\rangle 
	=  0
	 , \quad n\geqslant 0,
$$
whence \eqref{dif eq u0}. Likewise, the action of $u_{k+1}$ over \eqref{dif difference eq over Pn} provides 
$$% \begin{array}{l}
	 \ds \left\langle u_{k+1}(\alpha) \ ,\ 
	 -x^2 \ P_n''(x) 
	+ x\Big(2x - 1 -2\alpha\Big) \ P_n'(x)
	+ \Big( (2\alpha +1)x - \alpha^{2} \Big) \ P_n(x) \right\rangle 
	= (2n+2\alpha +1) \ \delta_{n,k}	
	 , \ n\geqslant 0,
%	 \end{array}
$$
and, again, by transposition, this latter leads to 
$$%\begin{array}{l}
	 \ds\langle  -\big(x^2 \ u_{k+1}(\alpha)\big)'' 
	-\big( x\big(2x - 1 -2\alpha\big) \ u_{k+1}(\alpha) \big)'
	+ \big( (2\alpha +1)x - \alpha^{2} \big) u_{k+1}(\alpha) \ , \ P_n(x)\rangle 
	\ds = (2n+2\alpha +1) \ \delta_{n,k}	
	 , \ n\geqslant 0,
%\end{array}
$$
therefore,  according to \eqref{u in terms of un}, we derive 
\eqref{dif eq un}.% appears as a consequence. 
On the other hand, the action of both sides of \eqref{dif eq u0} over each element of the sequence $\{x^n\}_{n\geqslant0}$ leads to the following difference equation having the moments of $u_{0}(\alpha )$ as solution 
\begin{equation*}
	(2n+2\alpha +1) \big( u_{0}(\alpha)\big)_{n+1} =  - (n+\alpha)^{2} \big( u_{0}(\alpha)\big)_{n}
	\quad , \quad \ n\in\mathbb{N}_{0}.
\end{equation*}
providing \eqref{moments u0}. 
\end{proof}

In addition, according to \eqref{u in terms of un},  we have as well
$$
	x u_{0}(\alpha) = \sum_{\nu\geqslant 0} \langle x\  u_{0}(\alpha),P_{n}(x;\alpha+1) \rangle u_{n}(\alpha+1).
$$
Considering \eqref{Ttranspose} and \eqref{xPn alpha +1}, it follows
$$
	\langle x\  u_{0}(\alpha),P_{n}(x;\alpha+1) \rangle 
	= \langle   u_{0}(\alpha),P_{n+1}(x;\alpha) + \frac{\alpha^2}{2n+2\alpha+1}P_{n}(x;\alpha) \rangle 
	= \frac{\alpha^2}{2n+2\alpha+1}\delta_{n,0}, 
$$
that holds for any $n\in\mathbb{N}_{0}$. Therefore the two forms $u_{0}(\alpha)$ and $u_{0}(\alpha+1)$ are related by 
\begin{equation}
	x\ u_{0}(\alpha) = \frac{\alpha^{2}}{2\alpha+1} \ u_{0}(\alpha+1) \ .
	\label{x u0 alpha}
\end{equation}

The question on whether the MPS $\{ P_{n}(x;\alpha)\}_{n\geqslant 0}$ can be orthogonal arises in a natural way and, concomitantly, on whether the form $u_{0}(\alpha)$ is or is not regular. 

As a matter of fact, we recall that a form $v$ is said to be {\it regular}\label{regular form} if we can associate with it a PS $\{Q_{n}\}_{n\geqslant 0}$ such that $\langle v , Q_{n} Q_{m} \rangle = k_{n} \delta_{n,m}$ with $k_{n}\neq0$ for all $n,m\in\mathbb{N}_{0}$ \cite{MaroniTheorieAlg,MaroniVariations}. The PS $\{Q_{n}\}_{n\geqslant 0}$ is then said to be orthogonal with respect to $v$ and we can assume the system (of orthogonal polynomials) to be monic. Therefore, there exists a dual sequence $\{v_{n}\}_{n\geqslant0}$ and the original form is proportional to $v_{0}$. Furthermore, it holds 
\begin{align} &	\label{MOPS un expr} 
	\ v_{n+1} = \left( \langle v_{0}, Q_{n+1}^2(\cdot) \rangle \right)^{-1} \; Q_{n+1}(x) v_{0} 
	\ , \ n\in\mathbb{N}_{0}. 
\end{align}
Moreover, when $v\in\mathcal{P}'$ is regular, let $\Phi$ be a polynomial such that $\Phi v=0$, then $\Phi=0$ \cite{MaroniTheorieAlg,MaroniVariations}.  

In this case we call this unique MPS $\{Q_{n}(x)\}_{n\geqslant 0}$ as {\it monic orthogonal polynomial sequence} - hereafter MOPS - and it can be characterized by the popular second order recurrence relation 
\begin{align} \label{MOPS rec rel} 
&	\left\{ \begin{array}{@{}l}
		Q_{0}(x;\alpha)=1 \quad ; \quad Q_{1}(x;\alpha)= x-\beta_{0}{(\alpha)} \vspace{0.15cm}\\
		Q_{n+2}(x;\alpha) = (x-\beta_{n+1}{(\alpha)})Q_{n+1}(x;\alpha) - \gamma_{n+1}{(\alpha)} \, Q_{n}(x;\alpha)  \quad , \quad n\in\mathbb{N}_{0}. 
	\end{array} \right. 
\end{align}
Here, we consider the dependence on the complex parameter $\alpha$ to avoid repetition of similar formulas. 
We will  systematically refer to the pair $(\beta_{n}{(\alpha)},\gamma_{n+1}{(\alpha)})_{n\geqslant 0}$ as the recurrence coefficients of $\{Q_{n}(x;\alpha)\}_{n\geqslant 0}$, necessarily fulfilling $\gamma_{n+1}{(\alpha)}\neq0,\; n\geqslant0$. See \cite{MaroniTheorieAlg,MaroniVariations} for notation and a compendium of results about algebraic properties of orthogonal polynomial sequences along with regular forms. 

Somehow expected, we have the following result. 

\begin{lemma} The MPS $\{P_{n}(x;\alpha)\}_{n\geqslant 0}$ cannot be regularly orthogonal. 
\end{lemma}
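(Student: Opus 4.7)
The plan is to argue by contradiction, exploiting the uniqueness (up to nonzero scalar) of the form against which a given MPS can be orthogonal. Indeed, if $\langle v, P_{n}(x;\alpha)P_{m}(x;\alpha)\rangle = k_{n}\delta_{n,m}$ with $k_{n}\neq 0$, then $k_{n}^{-1}P_{n}(x;\alpha)\,v$ fulfills the defining conditions of $u_{n}(\alpha)$, so necessarily $v = k_{0}\,u_{0}(\alpha)$. Consequently, if $\{P_{n}(x;\alpha)\}_{n\geqslant 0}$ were regularly orthogonal, one would have $\langle u_{0}(\alpha), x^{k}P_{n}(x;\alpha)\rangle = 0$ for every pair $0\leqslant k<n$; it suffices to violate one such identity.

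I would aim at the simplest nontrivial one, namely $\langle u_{0}(\alpha), xP_{2}(x;\alpha)\rangle = 0$. From \eqref{def Pn A n} combined with the explicit formula for $p_{2}(x;\alpha)$ displayed after Lemma \ref{lem: equations for pn}, one reads off
\begin{equation*}
P_{2}(x;\alpha) = x^{2} - \frac{2\alpha^{2}+2\alpha+1}{2\alpha+3}\,x + \frac{\alpha^{4}}{(2\alpha+1)(2\alpha+3)},
\end{equation*}
while \eqref{moments u0} supplies $(u_{0}(\alpha))_{k}$ in closed form for $k=1,2,3$. Substituting and placing the three contributions over the common denominator $(2\alpha+1)^{2}(2\alpha+3)^{2}(2\alpha+5)$, a routine but careful calculation collapses the numerator to $\alpha^{2}\bigl(8\alpha^{4}+40\alpha^{3}+64\alpha^{2}+36\alpha+7\bigr)$; I would sanity-check the simplification at a concrete value such as $\alpha=1$.

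It then remains to see this is nonzero under the standing hypothesis $\Re(\alpha)>0$. The prefactor $\alpha^{2}$ is nonzero, and the quartic $q(\alpha) = 8\alpha^{4}+40\alpha^{3}+64\alpha^{2}+36\alpha+7$ has strictly positive coefficients (ruling out nonnegative real roots) and satisfies the Routh--Hurwitz stability conditions (the Hurwitz determinants $a_{3}a_{2}-a_{4}a_{1}=2272$ and $a_{3}a_{2}a_{1}-a_{4}a_{1}^{2}-a_{3}^{2}a_{0}=70592$ are both positive), so every complex root of $q$ has strictly negative real part. Hence $q(\alpha)\neq 0$ throughout the right half-plane and $\langle u_{0}(\alpha), xP_{2}(x;\alpha)\rangle \neq 0$, contradicting the orthogonality hypothesis. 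The only delicate piece of the argument is the bookkeeping in the moment computation; once past it, the stability check is immediate.
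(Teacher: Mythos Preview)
Your argument is correct under the hypothesis $\Re(\alpha)>0$, and the computation checks out (the numerator $\alpha^{2}(8\alpha^{4}+40\alpha^{3}+64\alpha^{2}+36\alpha+7)$ is right, and the Routh--Hurwitz test is applied correctly). However, your route is quite different from the paper's. The paper never computes a specific moment; instead it argues structurally: assuming orthogonality, the general identity $u_{n+1}(\alpha)=\langle u_{0}(\alpha),P_{n+1}^{2}\rangle^{-1}P_{n+1}(x;\alpha)\,u_{0}(\alpha)$ is fed into the second-order equation \eqref{dif eq un} for the dual sequence, and after using \eqref{dif eq u0} to eliminate the derivative of $u_{0}(\alpha)$, the regularity of $u_{0}(\alpha)$ forces the polynomial identity $x^{2}P_{n+1}''(x;\alpha)=\lambda_{n}(\alpha)P_{n}(x;\alpha)-\lambda_{0}(\alpha)P_{n+1}'(x;\alpha)$, which is impossible on degree grounds for $n\geqslant 1$.

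The trade-off is this: the paper's argument is parameter-free and works uniformly for every $\alpha$ with $\Re(\alpha)>-\tfrac12$ (the full range for which $\{P_{n}\}$ is an MPS), with no arithmetic to verify. Your approach is conceptually lighter---it reduces the question to a single numerical obstruction---but it is tied to the explicit moments and to the restriction $\Re(\alpha)>0$; indeed, the very moment you compute vanishes at $\alpha=0$ because of the $\alpha^{2}$ prefactor, so your particular obstruction would not extend to the boundary case without choosing a different test. If you wanted a moment-based proof valid on the whole range $\Re(\alpha)>-\tfrac12$, you would need either a different pairing or an additional argument at the zeros of your numerator; the paper's structural method sidesteps this entirely.
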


\begin{proof} Under the assumption that $\{P_{n}(x;\alpha)\}_{n\geqslant 0}$ defined through \eqref{def Pn A n} is orthogonal, we may insert relation \eqref{MOPS un expr}, with $v_{n}$ replaced by $u_{n}(\alpha)$ and $Q_{n}$ by $P_{n}(\cdot;\alpha)$, in \eqref{dif eq un} which provides 
\begin{equation}\label{nonortho proof rel1}\begin{array}{@{}r@{}l@{}}
	 \Big( (x^2 P_{n+1}(x;\alpha)u_{0}(\alpha))' + x(2x-(1+2\alpha) )P_{n+1}(x;\alpha)u_{0}(\alpha)\Big)' \\
	-  ((1+2\alpha)x-\alpha^2)P_{n+1}(x;\alpha)u_{0}(\alpha)
		& \ =\lambda_{n}(\alpha)P_{n}(x;\alpha)u_{0}(\alpha)
		, \quad  n\in\mathbb{N}_{0}, 
		\end{array}
\end{equation}
where $\lambda_{n}(\alpha)= (2n+2\alpha +1) <u_{0}(\alpha), P_{n+1}^{2}(x;\alpha) > \left( <u_{0}(\alpha), P_{n}^{2}(x;\alpha)>\right)^{-1}$, for all $n\in\mathbb{N}_{0}$. Taking into account \eqref{dif eq u0}, then \eqref{nonortho proof rel1} implies the differential equation 
$$
	2 P_{n+1}'(x;\alpha) (x^2 u_{0}(\alpha))' 
	+\Big( x^2 P_{n+1}''(x;\alpha)+x(2x-(1+2\alpha) P_{n+1}'(x;\alpha)\Big)u_{0}(\alpha) 
	= \lambda_{n}(\alpha)P_{n}(x;\alpha)u_{0}(\alpha)		, \ n\geqslant 0. 
$$
When the equation obtained by the particular choice of $n=0$, {\it i.e.} 
$$
	2 (x^2 u_{0}(\alpha))' 
	+ x(2x-(1+2\alpha)u_{0}(\alpha) 
	= \lambda_{0}(\alpha)u_{0}(\alpha), 
$$
is inserted into the original equation, we obtain  
$$
	\Big( x^2 P_{n+1}''(x;\alpha)\Big)u_{0}(\alpha) 
	=\Big( \lambda_{n}(\alpha)P_{n}(x;\alpha)-\lambda_{0}(\alpha) P_{n+1}'(x;\alpha)\Big)u_{0}(\alpha). 
$$
The regularity of $u_{0}(\alpha)$ would now imply the condition 
$$ x^2 P_{n+1}''(x;\alpha)= \lambda_{n}(\alpha)P_{n}(x;\alpha)-\lambda_{0}(\alpha) P_{n+1}'(x;\alpha)\ , \ n\geqslant 0, 
$$
contradicting $\deg P_{n} (x;\alpha)=n$, and therefore crumbling the possibility of $\{P_{n}(x;\alpha)\}$ to be orthogonal. 
\end{proof}

Despite the non-(regular)orthogonality of  $\{P_{n}(x;\alpha)\}_{n\geqslant 0}$ with respect to the form $u_{0}(\alpha)$, we cannot exclude the existence of an orthogonal polynomial sequence, say $\{Q_{n}(x;\alpha)\}_{n\geqslant 0}$, with respect to $u_{0}(\alpha)$, which amounts to the same as ensuring the regularity of $u_{0}(\alpha)$. This question is handled in the next section. 

\subsection{About the regularity of  $u_{0}$}\label{subsec:regularity}

We begin by rewriting  \eqref{dif eq u0} as follows 
\begin{equation}\label{dif eq u0 n2}
	\Big( (\phi u_{0}(\alpha))' + \psi u_{0}(\alpha) \Big)' + \chi u_{0}(\alpha) = 0
\end{equation}
with 
\begin{equation}\label{dif eq polys Phi Psi Chi}
	\begin{array}{@{}l@{}}
	\ds \phi(x):=\phi(x;\alpha)= x^2  , \ \psi(x):=\psi(x;\alpha) = x(2x-2\alpha-1) , \ 
	\chi(x):=\chi(x;\alpha)= - (2\alpha+1)x + \alpha^2  .
	\end{array}
\end{equation}

Actually, while seeking an integral representation for  $u_{0}(\alpha)$, we realize that it is indeed regular. 

\begin{lemma} For any positive real value of the parameter $\alpha$, the form $u_{0}(\alpha)$ is positive definite (therefore regular) admitting the integral representation 
\begin{equation}	\label{IntRep of u0}
	\langle u_{0} , f(x) \rangle = \frac{2^{\alpha}  \Gamma(\alpha+1/2)}{\sqrt{\pi}  \ \Gamma(\alpha)^2} 
		\int_{0}^{+\infty} \! f(x) \ \textrm{e}^{-x} x^{\alpha-1} K_{0}(x) \ dx
		\quad , \quad \forall f\in \mathcal{P}. 
\end{equation}
\end{lemma}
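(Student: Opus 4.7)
The plan is to verify that the linear functional $v$ defined on polynomials by the right-hand side of \eqref{IntRep of u0} has the same moment sequence as $u_{0}(\alpha)$, which was already computed in \eqref{moments u0}; since a form on $\mathcal{P}$ is determined by its moments, this forces $u_{0}(\alpha)=v$, and positive-definiteness is then a direct consequence of the positivity of the weight.

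The first step is to compute $(v)_{n}=\langle v,x^{n}\rangle$. For this I would reuse the tabulated integral
$$
\int_{0}^{\infty} x^{\epsilon-1}\,\e^{-x}K_{i\tau}(x)\,dx = 2^{-\epsilon}\sqrt{\pi}\,\frac{|\Gamma(\epsilon+i\tau)|^{2}}{\Gamma(\epsilon+1/2)},
$$
already invoked in \S\ref{subsec Euler gen}. Specializing $\tau=0$ and $\epsilon=n+\alpha$ gives
$$
\int_{0}^{\infty} x^{n+\alpha-1}\,\e^{-x}K_{0}(x)\,dx = 2^{-(n+\alpha)}\sqrt{\pi}\,\frac{\Gamma(n+\alpha)^{2}}{\Gamma(n+\alpha+1/2)}.
$$
Multiplying by the normalising factor $\frac{2^{\alpha}\Gamma(\alpha+1/2)}{\sqrt{\pi}\,\Gamma(\alpha)^{2}}$ and using the standard identities $\Gamma(n+\alpha)=(\alpha)_{n}\Gamma(\alpha)$ and $\Gamma(n+\alpha+1/2)=(\alpha+1/2)_{n}\Gamma(\alpha+1/2)$, the gamma factors collapse and one gets
$$
(v)_{n}=\frac{[(\alpha)_{n}]^{2}}{2^{n}(\alpha+1/2)_{n}},
$$
precisely the expression in \eqref{moments u0}. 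Hence $u_{0}(\alpha)=v$ on the canonical basis, and therefore on all of $\mathcal{P}$.

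For positive-definiteness, the weight $\e^{-x}x^{\alpha-1}K_{0}(x)$ is strictly positive throughout $\mathbb{R}^{+}$, and integrable against any polynomial because the only singularity at $x=0$ is the logarithmic one carried by $K_{0}$, which is absorbed by $x^{\alpha-1}$ as soon as $\alpha>0$. Consequently, for every nonzero $p\in\mathcal{P}$ one has $\langle u_{0}(\alpha),p^{2}\rangle>0$, so $u_{0}(\alpha)$ is positive-definite and, in particular, regular.

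The only delicate point is thus the identification of the integral; there is no genuine obstacle beyond bookkeeping with Pochhammer symbols and checking integrability near the origin. I would however want to remark that restricting $\alpha$ to be real positive (rather than merely $\Re(\alpha)>0$) is what guarantees the weight is real and positive, which is exactly what is needed for the positive-definite (as opposed to merely regular) conclusion.
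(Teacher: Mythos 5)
Your proof is correct, but it proceeds in the opposite direction from the paper's. You take the candidate weight as given and \emph{verify} it by moment matching: specializing the tabulated integral $\int_{0}^{\infty}x^{\epsilon-1}\e^{-x}K_{i\tau}(x)\,dx=2^{-\epsilon}\sqrt{\pi}\,|\Gamma(\epsilon+i\tau)|^{2}/\Gamma(\epsilon+1/2)$ to $\tau=0$, $\epsilon=n+\alpha$ does yield $(v)_{n}=[(\alpha)_{n}]^{2}/(2^{n}(\alpha+1/2)_{n})$, which agrees with \eqref{moments u0}, and since a form on $\mathcal{P}$ is determined by its values on the basis $\{x^{n}\}_{n\geqslant 0}$ this identifies $v$ with $u_{0}(\alpha)$; positivity of the weight then gives positive-definiteness exactly as you say. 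The paper instead \emph{derives} the weight: it transposes the functional equation \eqref{dif eq u0} into the ordinary differential equation $(\phi U)''+(\psi U)'+\chi U=0$ for a putative density $U$, reduces it via $U(x)=\e^{-x}x^{\alpha-1}y(x)$ to $\mathcal{A}(y)=0$, selects the decaying solution $K_{0}$ over $I_{0}$, normalizes, and checks the boundary terms vanish. Your route is shorter and avoids the boundary-term verification entirely (which the paper must do to justify that the constructed $U$ really represents $u_{0}(\alpha)$ and not merely a solution of the same ODE), at the price of having to know the answer in advance and of relying on the closed-form integral; the paper's route is constructive and explains where $K_{0}$ comes from, which matters in the broader context since the same Bessel structure recurs throughout. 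Both correctly require $\alpha>0$ real, yours for integrability of the logarithmic singularity at the origin and for realness of the weight, the paper's for the normalization integral to be a strictly positive convergent quantity.
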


\begin{proof} We seek a function $U(x):=U(x;\alpha)$ such that \eqref{IntRep of u0} holds in a certain domain $C$. Since $\langle u_{0} , 1\rangle=1\neq 0$, we must have 
\begin{equation}\label{In U neq 0}
	\int_{C}   U(x) dx =1\neq 0 
\end{equation}
By virtue of \eqref{dif eq u0 n2}, we have, for any $f\in\mathcal{P}$
$$\begin{array}{lll}
	0 	&=& \langle \Big( (\phi(x) u_{0})' + \psi(x) u_{0} \Big)' + \chi(x) u_{0} , f (x)\rangle 
		= \langle u_{0}, \phi(x) f''(x) + \psi(x) f'(x) + \chi(x) f(x)  \rangle \vspace{0.2cm}\\
		%&=& \ds\int_{C} U(x) \left( \phi(x) f''(x)' + \psi(x) f'(x) + \chi(x) f(x)  \right) dx \vspace{0.2cm}\\ 
		&=& \ds \int_{C}\left( (\phi(x) U(x))''+(\psi(x) U(x))'+\chi(x)U(x)\right)f(x) dx \\
		&&	- \left.\Big(\phi(x)U(x)f'(x) - (\phi(x)U(x))'f(x) - \psi(x) U(x)f(x)\Big)\right|_{C} 
\end{array}
$$
therefore, $U(x)$ is a function  simultaneously fulfilling 
\begin{align}
	 & \int_{C}\left( (\phi(x) U(x))''+(\psi(x) U(x))'+\chi(x)U(x)\right)f(x) dx =0\quad ,\quad \forall f\in\mathcal{P}
	 	\label{Cond1 for U}\\
	 & \left.\Big(\phi(x)U(x)f'(x) - (\phi(x)U(x))'f(x) - \psi(x) U(x)f(x)\Big)\right|_{C}=0
	 	\quad ,\quad \forall f\in\mathcal{P} \ .
		\label{Cond2 for U}
\end{align}
The first equation implies 
$$
	 (\phi(x) U(x))''+(\psi(x) U(x))'+\chi(x)U(x) = \lambda g(x)
$$
where $\lambda$ is a complex number and $g(x)\neq 0$ is a function representing the null form, that is, a function such that  
$$
	 \int_{C} g(x) f(x) dx =0 \quad , \quad \forall f\in\mathcal{P}.
$$
We begin by choosing $\lambda=0 $ and we search a regular solution of the differential equation 
$$
	 (\phi(x) U(x))''+(\psi(x) U(x))'+\chi(x)U(x) = 0 
$$
Upon the change of variable $U(x)=\textrm{e}^{-x} x^{\alpha-1} y(x)$ we have 
$$
	x^2 y''(x) + x y'(x) - x^2 y(x) = 0
	\quad \Leftrightarrow \quad
	\mathcal{A} \Big(y(x)\Big) = 0
$$
whose general solution is: \ 
$
	y(x) = c_{1} I_{0}(x) + c_{2} K_{0}(x) \ , \ x\geqslant 0, 
$
for some arbitrary constants $c_{1},c_{2}$ and $y(x)=0$ when $x<0$ \cite{Bateman}. 
As a consequence 
$
	U(x) = \textrm{e}^{-x} x^{\alpha-1} \Big\{c_{1} I_{0}(x) + c_{2} K_{0}(x) \Big\}\ , \ x\geqslant 0. 
$
Insofar as $U(x)$ must be a rapidly decreasing sequence (that is, such that $\lim\limits_{x\to +\infty} f(x)U(x)=0$ for any polynomial $f$), we  set $c_{1}=0$ and $c_{2}\neq 0$ and we write 
$
	U(x) = c_{2} \textrm{e}^{-x} x^{\alpha-1} K_{0}(x) \ , \ x\in C=]0,+\infty[
$
with $c_{2}$ set in order to realize \eqref{In U neq 0}, which amounts to the same as 
$
	c_{2} = \frac{2^{\alpha}  \Gamma(\alpha+1/2)}{\sqrt{\pi}  \ \Gamma(\alpha)^2}\ .
$
In this case $U(x)$ also fulfills the condition \eqref{In U neq 0} as long as $\alpha > 0 $ because 
$
	\int_0^{+\infty} \textrm{e}^{-x} x^{\alpha-1} K_{0}(x) dx  
$ is a stricktly positive convergent integral. 
Besides, since $K_{\nu}(x)$ has the asymptotic behaviour with respect to $x$ \cite[Vol. II]{Bateman}\cite{YakubovichBook1996}
\begin{align*}
	& K_{\nu}(x) = \left(\frac{\pi}{2x}\right)^{1/2}\e^{-x}[1+O(1/x)] , \quad x\rightarrow +\infty
		\label{Knu at infty}\\
	& K_{\nu}(x) =O(x^{-\Re(\nu)}) \ , K_{0}(x) =O(-\log x)  \ , \quad x\rightarrow 0, 
%	& K_{0}(x) =O(-\log x) , \quad x\rightarrow 0, \label{K0 at 0}
\end{align*}
then we immediately deduce that \eqref{Cond2 for U}  \eqref{Cond2 for U} is fulfilled by any element of the PS $\{x^n\}_{n\geqslant 0}$ (that spans $\mathcal{P}$) and hence it is fulfilled by any element $f$ of $\mathcal{P}$.  
Moreover, for every polynomial $g(x)$ that is not identically zero and is non-negative for all real $x$ we have 
$\ 
\ds	\langle u_{0} ,  g(x) \rangle =  \frac{2^{\alpha}  \Gamma(\alpha+1/2)}{\sqrt{\pi}  \ \Gamma(\alpha)^2}\int_{0}^{+\infty} \! g(x) \ \textrm{e}^{-x} x^{\alpha-1} K_{0}(x) \ dx > 0 \ 
$ 
and therefore $u_0$ is {\bf positive-definite} form. This implies that $u_0$ has real moments and a corresponding MOPS exists ({\it i.e.}, $u_0$ is a regular form). 
\end{proof}
The regularity of $u_0(\alpha)$ raises the problem of characterizing a corresponding orthogonal polynomial sequence, say $\{Q_n(x;\alpha)\}_{n\geqslant 0}$, whenever $\alpha$ is a positive real number. From this point forth we consider $ \alpha\in\mathbb{R}^+$.

Entailed in the question of characterizing the arisen MOPS $\{Q_{n}(x;\alpha)\}_{n\geqslant 0}$ (with respect to $u_{0}(\alpha)$), comes out the problem of determining the associated recurrence coefficients  or even a differential equation fulfilled by $\{Q_{n}(x;\alpha)\}_{n\geqslant 0}$ or an analogue of Rodrigues' formula or a generating function. However, this has revealed to be a tricky problem to solve, reminding other open problems such as the one posed by Prudnikov \cite{PrudnikovProblem}. Indeed, the difficult part of this problem is connected to the fact that the regular form $u_{0}(\alpha)$ is solution of a differential equation of order higher than one. As far as we can tell the gap in the theory concerned to this problem, avoids us to attain further results. 
Notwithstanding this, based on the work  \cite{MaroniRelTypeFini}, it is possible to reach a finite-type relation (of order two actually) between the two MOPSs $\{Q_{n}(x;\alpha)\}_{n\geqslant 0}$ and $\{Q_{n}(x;\alpha+1)\}_{n\geqslant 0}$. Again, this adds very few to the answer of the problem. 

The recurrence coefficients $(\beta_{n}{(\alpha)}, \gamma_{n+1}{(\alpha)})_{n\geqslant 0}$ associated to the second order recurrence relation \eqref{MOPS rec rel} fulfilled by $\{Q_n(\cdot;\alpha)\}_{n\geqslant 0}$ may be successively computed either by means of the weight function or using the Hankel determinant of the moments of $u_{0}(\alpha)$ \cite{ChiharaBook}. Making use of the first process, for each positive integer $n$ we have 
\begin{align}
&	\beta_n{(\alpha)} = \frac{\langle u_0(\alpha), x Q_n^2(x;\alpha) \rangle}
		{\langle u_0(\alpha), Q_n^2 (x;\alpha)\rangle} 
		= \frac{\ds\int_{0}^{+\infty} \!  Q_{n}^2(x;\alpha) \ \textrm{e}^{-x} x^{\alpha} K_{0}(x) \ dx}
		{\ds\int_{0}^{+\infty} \!  Q_{n}^2(x;\alpha) \ \textrm{e}^{-x} x^{\alpha-1} K_{0}(x) \ dx}\\
&	\gamma_{n+1}{(\alpha)} =  \frac{\langle u_0(\alpha),  Q_{n+1}^2 (x;\alpha) \rangle}
			{\langle u_0(\alpha), Q_n^2(x;\alpha) \rangle}
			=  \frac{\ds\int_{0}^{+\infty} \!  Q_{n+1}^2(x;\alpha) \ \textrm{e}^{-x} x^{\alpha-1} K_{0}(x) \ dx}
		{\ds\int_{0}^{+\infty} \!  Q_{n}^2(x;\alpha) \ \textrm{e}^{-x} x^{\alpha-1} K_{0}(x) \ dx}.
\end{align}
According to this latter, we list the first elements 
$$\begin{array}{lll}
 \beta_{0}{(\alpha)}= \frac{\alpha ^2}{2 \alpha +1} &\quad , \quad &
  \beta_{1}{(\alpha)}= \frac{\alpha  (2 \alpha  (\alpha +4)+7) (\alpha  (2 \alpha +5)+4)+4}{(2
   \alpha +1) (2 \alpha +5) (2 \alpha  (\alpha +2)+1)} \\
% \beta_{2}{(\alpha)}= \frac{\alpha  (\alpha  (\alpha  (\alpha  (4 \alpha  (2 \alpha  (\alpha 
%   (\alpha  (\alpha  (\alpha  (2 \alpha
%   +49)+528)+3291)+13117)+34851)+124725)+593067)+450147)+201853)+46248)+
%   4104}{(2 \alpha +5) (2 \alpha +9) (2 \alpha  (\alpha +2)+1) (\alpha 
%   (\alpha  (2 \alpha  (\alpha  (2 \alpha  (\alpha
%   +12)+113)+262)+613)+325)+51)}\\
    \gamma_1{(\alpha)} =\frac{\alpha ^2 (2 \alpha  (\alpha +2)+1)}{(2 \alpha +1)^2 (2 \alpha
   +3)}  &\quad , \quad &
\gamma_2{(\alpha)} = \frac{4 (\alpha +1)^2 (2 \alpha +1) (\alpha  (\alpha  (2 \alpha 
   (\alpha  (2 \alpha  (\alpha +12)+113)+262)+613)+325)+51)}{(2 \alpha
   +3) (2 \alpha +5)^2 (2 \alpha +7) (2 \alpha  (\alpha +2)+1)^2} \ .
\end{array}
$$
The determination of recurrence coefficients of higher order has indeed revealed to be a ticklish problem.

\end{document}